\documentclass[12pt]{article}

\usepackage{amsmath}
\usepackage{amsfonts}
\usepackage{amsthm}

\usepackage[utf8]{inputenc}

\usepackage{color}

\usepackage{multirow}

\usepackage{multicol}

\usepackage{booktabs}

\newcommand{\ma}{\textit{Mathematica$^{\small{\circledR}}$}}

\textwidth       16.5cm

\oddsidemargin    0cm

\date{}

\newtheorem{theo}{Theorem}
\newtheorem{lemma}{Lemma}

\newtheorem{prop}{Proposition}
\newtheorem{nota}{Remark}
\newtheorem{ejemplo}{Example}

\begin{document}

\title{Asymptotics for some $q$-hypergeometric polynomials}

\author{Juan F. Ma\~{n}as--Ma\~{n}as$^a$, Juan J. Moreno--Balc\'{a}zar$^{a,b}$.}

\maketitle

{\scriptsize
\noindent
$^a$Departamento de Matem\'{a}ticas, Universidad de Almer\'{\i}a, Spain.\\
$^b$Instituto Carlos I de F\'{\i}sica Te\'{o}rica y Computacional, Spain.\\

\noindent \textit{E-mail addresses:}  (jmm939@ual.es) J. F. Ma\~{n}as--Ma\~{n}as, (balcazar@ual.es) J. J. Moreno--Balc\'{a}zar.
}

\begin{abstract}
We tackle the study of a type of local asymptotics, known as Mehler--Heine asymptotics,  for some $q$--hypergeometric polynomials. Some consequences  about the asymptotic behavior of the zeros  of these polynomials are discussed. We illustrate the results with numerical examples.
\end{abstract}

\noindent \textbf{Keywords:}  $q$-hypergeometric polynomials  $\cdot$  Asymptotics

\noindent \textbf{Mathematics Subject Classification (2020):}  33D15 $\cdot$  30C15

\section{Introduction}\label{introducction}

The basic  $q$-hypergeometric function $ _r\phi_s$ is defined by the series (see, for example, \cite{NIST-q-Hyper} or \cite[f. (1.10.1)]{Koekoek-book-hyper})

\begin{equation}\label{q-basic}
_r\phi_{s}\left(\begin{array}{l}
a_{1}, \ldots, a_{r} \\
b_{1}, \ldots, b_{s}
\end{array} ; q, z\right)
=\sum_{k=0}^{\infty} \frac{\left(a_{1},\dots,a_{r} ; q\right)_{k}}{\left(b_{1},\dots,b_{s} ; q\right)_{k}}(-1)^{(1+s-r) k} q^{(1+s-r)\binom{k}{2}} \frac{z^{k}}{(q ; q)_{k}},
\end{equation}
where $\left(a_{1},\dots,a_{r} ; q\right)_{k}=\left(a_{1}; q\right)_{k}\left(a_{2}; q\right)_{k}\cdots\left(a_{r} ; q\right)_{k}$.
For our purposes we assume throughout the paper $0<q<1.$ The expressions $ \left(a_{j}; q\right)_{k}$ and $\left(b_{j}; q\right)_{k}$ denote  the $q$-analogues of the Pochhammer symbol, i.e., given a  complex number  $a$
\begin{equation}\label{q-Pochhammer}
(a ; q)_{0}:=1 \text { and }(a ; q)_{n}=\prod_{k=1}^{n}\left(1-a q^{k-1}\right)=\prod_{k=0}^{n-1}\left(1-a q^{k}\right), \qquad n \geq 1,
\end{equation}
with
\begin{equation}\label{q-Pochhammer-infty}
(a ; q)_{\infty}=\prod_{k=0}^{\infty}\left(1-a q^{k}\right).
\end{equation}

Obviously, the series (\ref{q-basic}) is well--defined when the quantities, known as $q$--shifted factorials or $q$--Pochhammer symbols,  $\left(b_{j}; q\right)_{k}\neq 0,$ for $j=0, \ldots, s.$ It is well known that the radius of convergence $\rho$ of the $q-$hypergeometric functions (\ref{q-basic}) is given by (see for example \cite[p. 15]{Koekoek-book-hyper})
\begin{equation*}
\rho=\left\{
       \begin{array}{ll}
         \infty, & \hbox{if $r<s+1$;} \\
         1, & \hbox{if $r=s+1$;} \\
         0, & \hbox{if $r>s+1$.}
       \end{array}
     \right.
\end{equation*}
In particular, for our interest $ _s\phi_s$ is always convergent. In this paper we consider  $r=s$ because it is the context where we can establish our main goal (see Theorem  \ref{th-MH}).

The $q$--series (1) is the analogous series in the framework of the $q$--analysis to the hypergeometric function given by (see, for example,  \cite{Andrews-et-1999} or \cite{NIST-hyper})
\begin{equation*}
{ }_{r} F_{s}\left(\begin{array}{l}
a_{1}, \ldots, a_{r} \\
b_{1}, \ldots, b_{s}
\end{array} ; z\right)=\sum_{k=0}^{\infty} \frac{\left(a_{1}\right)_{k} \cdots\left(a_{r}\right)_{k}}{\left(b_{1}\right)_{k} \cdots\left(b_{r}\right)_{k}} \frac{z^{k}}{k !}.
\end{equation*}
They are connected by the limit relation
\begin{equation*}\label{relationH-qH}
\lim _{q \rightarrow 1}\ _r\phi_{s}\left(\begin{array}{l}
q^{a_{1}}, \ldots, q^{a_{r}} \\
q^{b_{1}}, \ldots, q^{b_{s}}
\end{array} ; q,(q-1)^{1+s-r}z\right)={ }_{r} F_{s}\left(\begin{array}{l}
a_{1}, \ldots, a_{r} \\
b_{1}, \ldots, b_{s}
\end{array} ; z\right).
\end{equation*}
In particular, when $r=s$, we get
\begin{equation}\label{relationH-qH-r=s}
\lim _{q \rightarrow 1}\ _s\phi_{s}\left(\begin{array}{l}
q^{a_{1}}, \ldots, q^{a_{s}} \\
q^{b_{1}}, \ldots, q^{b_{s}}
\end{array} ; q,(q-1)z\right)={ }_{s} F_{s}\left(\begin{array}{l}
a_{1}, \ldots, a_{s} \\
b_{1}, \ldots, b_{s}
\end{array} ; z\right).
\end{equation}

When one of the  parameters $a_j $ in (\ref{q-basic}) is equal to $q^{-n}$,
where $n$ is a nonnegative integer, the basic $q$--hypergeometric function is a polynomial of degree at most $n $ in the variable $z$. Thus, our objective is to obtain a type of asymptotics for these $q$--polynomials. Concretely, by scaling  adequately these polynomials we intend to get a limit relation between them and a $q$--analogue of the Bessel function of the first kind. In the framework of orthogonal polynomials, this type of asymptotics is known as Mehler--Heine asymptotics (also as Mehler--Heine formula). Originally, this type of local asymptotics was introduced for a special case of orthogonal polynomials (OP), Legendre polynomials,  by the German mathematicians H. E. Heine and G. F. Mehler in the 19th century. Later, it was extended to the families of classical OP: Jacobi, Laguerre, Hermite (see, for example, \cite{sz}). More recently, these formulae were obtained for other families of polynomials such as OP in the Nevai's class \cite{aptekarev-1993}, discrete OP \cite{dom}, generalized Freud polynomials \cite{ambpr}, multiple OP \cite{takata}, \cite{wva} or Sobolev OP (in this context there are many papers in this century, being \cite{marmb} one of the first), among others.

These formulae have a nice consequence about the scaled zeros of the polynomials, i.e. using the well--known Hurwitz's theorem we can establish a limit relation between these scaled zeros and the ones of a Bessel function of the first kind. Thus, in \cite{Bracciali-JJMB-2015} the authors, starting from a Mehler--Heine formula for hypergeometric polynomials, make a study of the zeros of these polynomials. Notice that in this case the polynomials are not necessarily orthogonal. In this way, we are looking for  a similar result in the context of the $q$--analysis.   In fact,  we can find several nice works where the authors study Placherel--Rotach asymptotics  for  basic hypergeometric polynomials (see, for example \cite{Ismail-IMRN-2005}--\cite{Ismail-Zhang-2018}, \cite{Li-Wong-2013}, \cite{Wang-Wong-2010} and \cite{Zhang-2008b}--\cite{Zhang-2012}). In this type of asymptotics, most authors scale the variable $z$ using a divergent sequence, this is, $z \to a_n z$ with $a_n\to \infty$ when $n\to +\infty$. In these works the authors usually obtain relations between the basic hypergeometric polynomials and the $q$-Airy function.
We would like to highlight the work \cite{Li-Wong-2013} where the authors obtain a relation between the Stieltjes-Wigert orthogonal  polynomials $S_n(z;q)$ and the $q$-Airy function $A_q(z)$ using a scaling of the  variable satisfying $a_n\to 0$ when $n\to +\infty$.  In fact, the authors prove that (see \cite[Th. 1]{Li-Wong-2013})
$$S_n(z;q)=\frac{1}{(q;q)_n}\left(A_q(z)+r_n(z)\right),$$
where $z=uq^{-nt}$ with $-\infty<t<2$, $u\in\mathbb{C}\backslash\{0\}$ and  being $r_n(z)$ a remainder function (for more details see \cite[Th. 1]{Li-Wong-2013}). Clearly, $uq^{-nt}\to 0$ when $t<0$ and $n\to +\infty$.  To prove their results the authors use a symmetry property of the Stieltjes-Wigert orthogonal polynomials given by
$$S_n(z;q)=(-zq^n)S_n\left(\frac{1}{zq^{2n}};q\right).$$

In our case, we will show that the variable is scaled $z\to a_nz$ by a sequence $a_n$ satisfying  $a_n\to 0$ when $n\to +\infty$. So,  we can establish asymptotic relations between these $q$--polynomials  and a $q$--Bessel function.   Thus, the novelty of our approach is to extend the classical Mehler--Heine formulae to these $q$--hypergeometric polynomials.

Now, we establish the notation that we will use in the following sections  and we will show our main result. We denote by $[z]_q$ the well--known $q$-number given by
\begin{equation}\label{q-Number}[z]_q=\frac{1-q^{z}}{1-q},\end{equation}
for $0<q<1$, it is easy to prove that
\begin{equation} \label{limqnum} \lim _{q \rightarrow 1}[z]_{q}=z.\end{equation}
In addition, we will use the $q$-Gamma function given by (see, for example,
\cite[f. (10.3.3)]{Andrews-et-1999})
\begin{equation}\label{q-gamma}
\Gamma_{q}(z)=\frac{\left(q; q\right)_{\infty}}{\left(q^{z} ; q\right)_{\infty}}(1-q)^{1-z}, \qquad 0<q<1.
\end{equation}
This function is a $q$-analogue of the Gamma function. It is a meromorphic function, without zeros and with poles in $z=-n\pm2\pi i k /\log(q)$ where $n$ and $k$ are
nonnegative integers. This function verifies the relation (see \cite[Pag. 495]{Andrews-et-1999})
\begin{equation}\label{q-gamma-limit}
\lim _{q \rightarrow 1} \Gamma_{q}(z)=\Gamma(z),
\end{equation}
and satisfies
\begin{equation}\label{q-gamma-recurrence}
\Gamma_{q}(z+1)=[z]_{q} \Gamma_{q}(z) \qquad \operatorname{with} \qquad \Gamma_{q}(1)=1.
\end{equation}
An important role in this paper is played by  the  $q$--Bessel function $J_{\alpha}^{(2)}(z ; q)$  given by (see, for example, \cite[f. (1.14.8)]{Koekoek-book-hyper})
\begin{equation}\label{bessel-q-2}
J_{\alpha}^{(2)}(z ; q)=\frac{\left(q^{\alpha+1} ; q\right)_{\infty}}{(q ; q)_{\infty}}\left(\frac{z}{2}\right)^{\alpha} \ _0\phi_{1}\left(\begin{array}{c}
                                                                                                                                               - \\
                                                                                                                                               q^{\alpha+1}
                                                                                                                                           \end{array}
 ; q, \frac{-q^{\alpha+1}z^{2}}{4}\right),
\end{equation}
which is an extension of the Bessel function of the first kind $J_{\alpha}(z)$, i.e.
\begin{equation}\label{q-bessel-limit}
\lim _{q \rightarrow 1}J_{\alpha}^{(2)}((1-q)z ; q)=J_{\alpha}(z).
\end{equation}
With this notation, we will prove in the  Theorem \ref{th-MH} that, for $s\geq2,$
\begin{align*}\nonumber
&\lim_{n\to+\infty} \ _s\phi_{s}\left(\begin{array}{l}
q^{-n},q^{\mathfrak{a}_sn+\mathfrak{b}_s} \\
q^{\alpha},q^{\mathfrak{c}_sn+\mathfrak{d}_s}
\end{array} ; q, \frac{q^{n+\alpha}[n]_{q^{\mathfrak{c}_s}}}{[n]_q[n]_{q^{\mathfrak{a}_s}}}(q-1)z\right)\\
&= \left( \frac{[\mathfrak{a}_s]_q}{[\mathfrak{c}_s]_q}z\right)^{\frac{1-\alpha}{2}} \Gamma_{q}(\alpha) J_{\alpha-1}^{(2)}\left(2 (1-q) \sqrt{\frac{[\mathfrak{a}_s]_q}{[\mathfrak{c}_s]_q}z} ; q\right),
\end{align*}
where
\begin{align}\label{notation1}
q^{\mathfrak{a}_sn+\mathfrak{b}_s}&=q^{a_1n+b_1},q^{a_2n+b_2},\cdots,q^{a_{s-1}n+b_{s-1}},\\ \label{notation2}
[n]_{q^{\mathfrak{a}_s}}&=[n]_{q^{a_1}}[n]_{q^{a_2}}\cdots[n]_{q^{a_{s-1}}},\\ \label{notation3}
[\mathfrak{a}_s]_q&=[a_1]_q[a_2]_q\cdots[a_{s-1}]_q.
\end{align}
The above result is also true when $s=1$,  getting in this case the following relation:
\begin{equation*}
\lim_{n\to+\infty} \ _1\phi_{1}\left(\begin{array}{l}
q^{-n} \\
q^{\alpha},
\end{array} ; q, \frac{q^{n+\alpha}}{[n]_q}(q-1)z\right)= z^{\frac{1-\alpha}{2}} \Gamma_{q}(\alpha) J_{\alpha-1}^{(2)}\left(2 (1-q) \sqrt{z} ; q\right).
\end{equation*}

We also discuss the case $r-1\leq s$ in Proposition \ref{othcas}. In addition,  one of the referees asked us what result would be obtained when we take the scale $z\to q^nz$. These results are shown in Propositions \ref{qn} and \ref{pro-qnrs}. They are very nice but, as far as we know, they cannot be used to deduce the result obtained in \cite{Bracciali-JJMB-2015}, so we have included both scaling.

We structure the paper as follows. Section 2 is devoted to technical results which will be necessary in Section 3 to prove the main result, Theorem \ref{th-MH}, as well as  Propositions \ref{othcas}-\ref{pro-qnrs}.  Finally, in Section 4 we discuss the consequences of the Mehler-Heine formula on the asymptotic behavior of the zeros of these $q$--hypergeometric polynomials. We illustrate this discussion with a variety of numerical examples and we leave some questions open, concretely one about the zeros of the $q$--function $z^{1-\alpha}J_{\alpha-1}^{(2)}\left(z(1-q) ; q\right). $

\section{Technical results} \label{sec2}

This section is devoted to obtaining some properties about the $q$-Pochhammer symbol defined in (\ref{q-Pochhammer}) and the $q$-Gamma function given by (\ref{q-gamma}). Actually, we are going to establish six technical statements which are indispensable to prove our main result in the following section. We do not claim that these results are new, but we have not found anything like them in our search in the literature.

\begin{lemma} Let $k$ be a nonnegative integer and $z$  a complex number,  such that $\Gamma_{q}(z)$ is  well defined, then
\begin{equation}\label{q-gamma-relation}
\frac{\Gamma_{q}(z+k)}{\Gamma_{q}(z)}=\frac{\left(q^{z} ; q\right)_k}{(1-q)^{k}}.
\end{equation}
\end{lemma}

\begin{proof}
Using (\ref{q-Pochhammer}) and (\ref{q-gamma-recurrence}) it follows that
\begin{equation*}
\frac{\Gamma_{q}(z+k)}{\Gamma_{q}(z)}=[z]_q[z+1]_q\dots[z+k-1]_q= \frac{\left(q^{z} ; q\right)_{k}}{(1-q)^{k}}.
\end{equation*}
\end{proof}

\begin{lemma} Let $a$ be  a positive real number and $b$ a complex number.  Then, we have, for any $k\in\mathbb{Z}$ fixed,
\begin{equation}\label{q-stirling}
\lim _{n \rightarrow+\infty} \frac{\Gamma_{q}(an+b+k)}{\Gamma_{q}(an+b)}[an+b]_{q}^{-k}=1.
\end{equation}
\end{lemma}

\begin{proof}
First, we prove the result for  a nonnegative integer $k$. Then, using (\ref{q-gamma-recurrence}) in a recursive way, we get
\begin{align*}
\frac{\Gamma_{q}(an+b+k)}{\Gamma_{q}(an+b)}[an+b]_{q}^{-k}
=\prod_{j=0}^{k-1}\frac{[an+b+j]_{q}}{[an+b]_{q}}.
\end{align*}
Since $\displaystyle\lim _{n \rightarrow+\infty} \frac{[an+b+j]_q}{[an+b]_q}=1$ for all $j\in\{0,1,\dots,k-1\}$ we get the result. When $k$ is a negative integer, we can adapt the above proof easily applying (\ref{q-gamma-recurrence}) to the denominator instead of the numerator.
\end{proof}

\begin{lemma}\label{lemma3}
Let $k$ be a nonnegative integer. Then,
\begin{equation}\label{limit-qn}
\lim _{n \rightarrow+\infty} \frac{\left(q^{-n} ; q\right)_{k}}{[n]_{q}^{k} q^{-nk}}=(-1)^{k} q^{\binom{k}{2}}(1-q)^{k}.
\end{equation}
\end{lemma}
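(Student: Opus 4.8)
The plan is to expand the $q$-Pochhammer symbol $(q^{-n};q)_k$ directly from its product definition (\ref{q-Pochhammer}) and to extract the dominant power of $q$ from each factor. Writing
\[
(q^{-n};q)_k=\prod_{j=0}^{k-1}\bigl(1-q^{-n}q^{j}\bigr)=\prod_{j=0}^{k-1}\bigl(1-q^{j-n}\bigr),
\]
I would factor $-q^{j-n}$ out of the $j$-th term via the identity $1-q^{j-n}=-q^{j-n}\bigl(1-q^{n-j}\bigr)$. Collecting the signs and the powers of $q$, the only computation that requires care is the exponent bookkeeping: since $\sum_{j=0}^{k-1}(j-n)=\binom{k}{2}-kn$, this produces
\[
(q^{-n};q)_k=(-1)^k\,q^{\binom{k}{2}-kn}\prod_{j=0}^{k-1}\bigl(1-q^{n-j}\bigr).
\]

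Next I would handle the denominator. By the definition (\ref{q-Number}) of the $q$-number, $[n]_q^k\,q^{-nk}=(1-q^n)^k(1-q)^{-k}q^{-nk}$, so that the factors $q^{-nk}$ appearing in the numerator and the denominator cancel exactly. After this cancellation the ratio under the limit equals
\[
(-1)^k\,q^{\binom{k}{2}}(1-q)^k\,\frac{\prod_{j=0}^{k-1}\bigl(1-q^{n-j}\bigr)}{(1-q^n)^k}.
\]

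Finally, since $k$ is fixed and $0<q<1$, for each $j\in\{0,\dots,k-1\}$ we have $q^{n-j}\to 0$ and likewise $q^n\to 0$ as $n\to+\infty$; hence the remaining quotient of finite products tends to $1$, and the limit is precisely $(-1)^k q^{\binom{k}{2}}(1-q)^k$, as claimed. I do not anticipate any genuine obstacle here: the argument is an elementary manipulation of a finite product followed by a termwise limit, and the only point demanding attention is correctly tracking the sign $(-1)^k$ together with the quadratic exponent $\binom{k}{2}$ arising from $\sum_{j=0}^{k-1} j$.
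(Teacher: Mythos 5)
Your proof is correct, but it takes a genuinely different route from the paper's. The paper does not expand the product by hand: it invokes the standard identity \cite[f. (1.8.18)]{Koekoek-book-hyper}, which rewrites $\left(q^{-n};q\right)_k=(-1)^k q^{\binom{k}{2}-nk}\,\frac{(q;q)_n}{(q;q)_{n-k}}$, and then evaluates the remaining limit $\lim_{n\to+\infty}\frac{(q;q)_n}{(q;q)_{n-k}[n]_q^k}$ by passing to $q$-Gamma functions via (\ref{q-gamma-relation}) and applying the $q$-Stirling-type limit (\ref{q-stirling}). Your factorization $1-q^{j-n}=-q^{j-n}\left(1-q^{n-j}\right)$ re-derives exactly that cited identity, since $\prod_{j=0}^{k-1}\left(1-q^{n-j}\right)=\frac{(q;q)_n}{(q;q)_{n-k}}$, and your exponent bookkeeping $\sum_{j=0}^{k-1}(j-n)=\binom{k}{2}-kn$ matches; after the cancellation of $q^{-nk}$ your termwise limit (each factor $1-q^{n-j}\to 1$ and $(1-q^n)^k\to 1$, valid because $k$ is fixed and $0<q<1$) replaces the paper's appeal to its two earlier lemmas. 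What your argument buys is self-containedness: it needs nothing beyond the definitions (\ref{q-Pochhammer}) and (\ref{q-Number}) and the fact that $q^n\to 0$, so it is more elementary and independent of the $q$-Gamma machinery. What the paper's argument buys is economy within its own architecture: the $q$-Gamma relation and the $q$-Stirling limit are established anyway because they are reused elsewhere (for instance in Lemma \ref{lemma4}), so routing the computation through them keeps a single toolkit and anchors the key identity to a citable reference rather than re-proving it.
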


\begin{proof}
From \cite[f. (1.8.18)]{Koekoek-book-hyper}, we have
$$\lim _{n \rightarrow+\infty} \frac{\left(q^{-n} ; q\right)_{k}}{[n]_{q}^{k} q^{-nk}}=
(-1)^{k} q^{\binom{k}{2}}\lim _{n \rightarrow+\infty} \frac{(q ; q)_{n}}{(q ; q)_{n-k}[n]_{q}^{k}}.$$
Now, using (\ref{q-gamma-relation}) and (\ref{q-stirling}), the limit on the right side of the above expression can be computed as
\begin{equation*}
(-1)^{k}q^{\binom{k}{2}}\lim _{n \rightarrow+\infty} \frac{(q ; q)_{n}}{(q ; q)_{n-k}[n]_{q}^{k}}=(-1)^{k} q^{\binom{k}{2}}(1-q)^{k} \lim _{n \rightarrow+\infty} \frac{\Gamma_{q}(n+1) [n]_q^{-k}}{\Gamma_{q}(n-k+1)}=(-1)^{k} q^{\binom{k}{2}}(1-q)^{k},
\end{equation*}
from where the result arises.
\end{proof}

\begin{lemma}\label{lemma4}
Let $b$ be a complex number. Then, for a positive real number $a$, it holds
\begin{equation}\label{limit-qanb}
\lim _{n \rightarrow+\infty} \frac{\left(q^{an+b} ; q\right)_{k}}{(1-q)^{k}[n]_{q^{a}}^{k}}=[a]_{q}^{k}.
\end{equation}
\end{lemma}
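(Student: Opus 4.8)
The plan is to reduce the statement to a direct limit computation by expanding both the $q$-Pochhammer symbol in the numerator and the $q$-number $[n]_{q^a}$ in the denominator, and then exploiting that $q^{an}\to 0$ as $n\to+\infty$ (since $0<q<1$ and $a>0$). First I would use the definition (\ref{q-Pochhammer}) to write $(q^{an+b};q)_k=\prod_{j=0}^{k-1}(1-q^{an+b+j})$, a finite product of $k$ factors, and the definition (\ref{q-Number}) to write $[n]_{q^a}=(1-q^{an})/(1-q^a)$, so that $[n]_{q^a}^k=(1-q^{an})^k/(1-q^a)^k$.

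Substituting these into the quotient, I would factor out the constant $((1-q^a)/(1-q))^k$, which is exactly $[a]_q^k$ by (\ref{q-Number}), leaving
\begin{equation*}
\frac{(q^{an+b};q)_k}{(1-q)^k[n]_{q^a}^k}=[a]_q^k\,\frac{\prod_{j=0}^{k-1}(1-q^{an+b+j})}{(1-q^{an})^k}.
\end{equation*}
It then remains to show that the trailing fraction tends to $1$. Since $q^{an+b+j}=q^{b+j}q^{an}$ with $q^{b+j}$ a fixed bounded complex number and $q^{an}\to 0$, every factor $1-q^{an+b+j}\to 1$ and likewise $(1-q^{an})^k\to 1$; as $k$ is fixed the finite product converges to $1$ and the claim follows.

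I do not expect a genuine obstacle here: the only point worth stressing is conceptual rather than technical, namely that for $0<q<1$ the $q$-number $[n]_{q^a}$ does \emph{not} diverge but converges to the finite nonzero value $1/(1-q^a)$, so that $[n]_{q^a}^k$ is the correct normalization making the limit finite and nonzero. The hypothesis that $b$ is complex causes no difficulty, because $q^b$ enters only as a bounded multiplicative constant that is annihilated in the limit by $q^{an}\to 0$. For consistency with the style of Lemma \ref{lemma3} one could alternatively route the argument through the first two lemmas: rewriting $(q^{an+b};q)_k=(1-q)^k\,\Gamma_q(an+b+k)/\Gamma_q(an+b)$ via (\ref{q-gamma-relation}) and using (\ref{q-stirling}) to replace this ratio by $[an+b]_q^k$ up to a factor tending to $1$, after which $[an+b]_q\to 1/(1-q)$ yields the same value $[a]_q^k$; but the direct expansion above is shorter and self-contained.
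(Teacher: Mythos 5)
Your proof is correct, but it takes a more elementary route than the paper. The paper's own proof stays inside the $q$-Gamma machinery of Section \ref{sec2}: it rewrites $\frac{(q^{an+b};q)_k}{(1-q)^k}$ as the ratio $\Gamma_q(an+b+k)/\Gamma_q(an+b)$ via (\ref{q-gamma-relation}), replaces that ratio by $[an+b]_q^{k}$ up to a factor tending to $1$ via (\ref{q-stirling}), and then concludes from the two elementary limits $\lim_{n\to+\infty}[n]_q/[n]_{q^a}=[a]_q$ and $\lim_{n\to+\infty}[an+b]_q/[n]_q=1$ — exactly the alternative you sketch in your closing remark. Your main argument instead expands everything from the definitions (\ref{q-Pochhammer}) and (\ref{q-Number}), factors out $[a]_q^k=\bigl((1-q^a)/(1-q)\bigr)^k$, and reduces the claim to the observation that $q^{an}\to 0$ kills every remaining factor; this is shorter, self-contained, and avoids even implicitly invoking $\Gamma_q$ at the argument $an+b$ (whose well-definedness the gamma route tacitly needs, though only for finitely many $n$ can it fail). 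What the paper's route buys is stylistic uniformity: it reuses the lemmas already established and mirrors the proof of Lemma \ref{lemma3}, so the whole section runs on the same two tools. Your conceptual remark — that $[n]_{q^a}$ converges to the finite value $1/(1-q^a)$ rather than diverging, in contrast with the classical $q\to 1$ picture — is accurate and is precisely why this normalization gives a finite nonzero limit.
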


\begin{proof} For $a>0$ and $0<q<1$ we deduce easily
$\displaystyle\lim _{n \rightarrow+\infty} \frac{[n]_q}{[n]_{q^a}}=[a]_q$ and $\displaystyle\lim _{n \rightarrow+\infty} \frac{[a n+b]_q}{[n]_q}=1$. Then, using (\ref{q-gamma-relation}), (\ref{q-stirling}) and the above limits the result follows.
\end{proof}

For the next results, we assume $\binom{i}{j}=0$ when $i<j.$

\begin{lemma}\label{lemma5} We have for $n\geq1$,
\begin{equation}\label{inequality1}\left|\frac{\left(q^{-n} ; q\right)_{k}}{[n]_{q}^{k} q^{-nk}}\right| \leq q^{\binom{k}{2}}, \quad k=0,1, \ldots, n.\end{equation}
\end{lemma}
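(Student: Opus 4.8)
The plan is to make the quantity completely explicit by factorizing the $q$--Pochhammer symbol in the numerator, and then to bound the resulting product factor by factor. First I would write, directly from (\ref{q-Pochhammer}), $(q^{-n};q)_k=\prod_{j=0}^{k-1}(1-q^{-n}q^{j})=\prod_{j=0}^{k-1}(1-q^{j-n})$, and then extract from each factor its dominant power of $q$ by means of the elementary identity $1-q^{j-n}=-q^{j-n}(1-q^{n-j})$. Multiplying these $k$ identities together and using $\sum_{j=0}^{k-1}(j-n)=\binom{k}{2}-nk$, one arrives at the closed form
\begin{equation*}
(q^{-n};q)_k=(-1)^{k}\,q^{\binom{k}{2}-nk}\prod_{j=0}^{k-1}\bigl(1-q^{n-j}\bigr).
\end{equation*}

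Dividing by $[n]_q^{k}q^{-nk}$, the factor $q^{-nk}$ cancels and the power $q^{\binom{k}{2}}$ is isolated. Taking absolute values then gives
\begin{equation*}
\left|\frac{(q^{-n};q)_k}{[n]_q^{k}q^{-nk}}\right|
=q^{\binom{k}{2}}\prod_{j=0}^{k-1}\frac{1-q^{n-j}}{[n]_q},
\end{equation*}
where I would point out that each factor is positive: since $0\leq j\leq k-1\leq n-1$ we have $n-j\geq 1$, hence $0<q^{n-j}<1$ and $0<1-q^{n-j}<1$.

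It then remains to show that the product on the right is at most $1$, and for this it suffices to bound each factor separately. Indeed $1-q^{n-j}<1$ while $[n]_q=1+q+\dots+q^{n-1}\geq 1$ for $n\geq 1$, so $\frac{1-q^{n-j}}{[n]_q}\leq 1$ for every admissible $j$, and the claimed inequality (\ref{inequality1}) follows; the case $k=0$ is the trivial equality $1\leq q^{0}$. I would also remark that this display is consistent with Lemma~\ref{lemma3}: writing $1-q^{n-j}=(1-q)[n-j]_q$, each factor equals $(1-q)\,[n-j]_q/[n]_q$, which tends to $1-q$ as $n\to+\infty$ and thus recovers the limit $(-1)^{k}q^{\binom{k}{2}}(1-q)^{k}$ (and in fact yields the slightly sharper finite-$n$ bound $q^{\binom{k}{2}}(1-q)^{k}$, since $[n-j]_q\leq[n]_q$). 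The only real subtlety is the bookkeeping of the sign and of the collected exponent $\binom{k}{2}-nk$; once the factorization is set up there is no analytic difficulty, as every factor is controlled by the trivial estimate $0<1-q^{n-j}<1\leq[n]_q$.
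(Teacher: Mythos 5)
Your proof is correct and is essentially the paper's own argument: the closed form you derive by writing $1-q^{j-n}=-q^{j-n}\bigl(1-q^{n-j}\bigr)$ is exactly the identity $(q^{-n};q)_k=\frac{(q;q)_n}{(q;q)_{n-k}}(-1)^k q^{\binom{k}{2}-nk}$ that the paper cites from Koekoek--Lesky--Swarttouw (f.\ (1.8.18)), and your factor-by-factor estimate $\frac{1-q^{n-j}}{[n]_q}\leq 1$ is the paper's bound $\left(\frac{1-q}{1-q^{n}}\right)^{k}\prod_{j=n-k}^{n-1}\bigl(1-q^{j+1}\bigr)\leq 1$ with the same factors merely regrouped. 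The only difference is cosmetic: you prove the key identity from scratch instead of citing it, which makes the argument self-contained but not a different route.
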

\begin{proof}
For $k=0$ the proof is trivial and the equality is reached. For $k\ge 1,$ using  \cite[f. (1.8.18)]{Koekoek-book-hyper}, we have
\begin{eqnarray*}
\left| \frac{\left(q^{-n} ; q\right)_{k}}{[n]_{q}^{k} q^{-nk}}\right|
=\left|  \frac{(q ; q)_{n}}{(q ; q)_{n-k}} \frac{(-1)^{k} q^{\binom{k}{2}-kn}}{[n]_{q}^{k}\ q^{-k n}}\right|=q^{\binom{k}{2}}  \frac{(q ; q)_{n}}{(q ; q)_{n-k} [n]_{q}^{k}}=q^{\binom{k}{2}} \left(\frac{1-q}{1-q^n}\right)^k  \prod_{j=n-k}^{n-1}\left(1-q^{j+1}\right).
\end{eqnarray*}

\noindent
Clearly,  $\displaystyle\left(\frac{1-q}{1-q^{n}}\right)^{k}\leq1$  and $\displaystyle \prod_{j=n-k}^{n-1}\left(1-q^{j+1}\right)<1.$  Then,
$$
q^{\binom{k}{2}} \left(\frac{1-q}{1-q^n}\right)^k  \prod_{j=n-k}^{n-1}\left(1-q^{j+1}\right)\leq q^{\binom{k}{2}}, $$
which proves the result.
\end{proof}

In the next two statements we provide useful bounds for
$$
\left|\frac{\left(q^{an+b} ; q\right)_{k}}{(1-q)^{k}[n]_{q^{a}}^{k}}\right|, \quad k=0,1,\ldots, n.
$$
where $b$ is a complex number with some restrictions (see Proposition \ref{lower-bound}) and $a$ is a positive real number. These bounds allow us to prove the main result of this paper.

\begin{prop}\label{upper-bound}
Let $a$ be a positive real number and $b=\gamma+i\beta$ a complex number. Then,  we have for $n\geq1,$
$$\left|\frac{\left(q^{an+b} ; q\right)_{k}}{(1-q)^{k}[n]_{q^{a}}^{k}}\right| \leq
\frac{[a]_{q}^{k}}{(1-q^a)^{k}} \left(\sqrt{1+q^{2a+2\gamma}+2q^{a+\gamma}}\right)^k, \quad k=0,1,\ldots, n. $$
\end{prop}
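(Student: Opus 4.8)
The plan is to write the quantity explicitly through the definitions and then bound its modulus factor by factor. Using $(q^{an+b};q)_k=\prod_{j=0}^{k-1}(1-q^{an+b+j})$ together with $[n]_{q^a}=(1-q^{an})/(1-q^a)$ and the identity $[a]_q=(1-q^a)/(1-q)$ from (\ref{q-Number}), I would first recast
$$\frac{(q^{an+b};q)_k}{(1-q)^k[n]_{q^a}^k}=[a]_q^{\,k}\,\frac{\prod_{j=0}^{k-1}\bigl(1-q^{an+b+j}\bigr)}{(1-q^{an})^k}.$$
Since $[a]_q>0$ and $1-q^{an}>0$ for $0<q<1$, $a>0$ and $n\geq1$, taking absolute values reduces the whole problem to bounding the product in the numerator from above and the denominator from below.

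For the denominator, $n\geq1$ forces $an\geq a$, and $0<q<1$ then gives $q^{an}\leq q^a$, hence $1-q^{an}\geq 1-q^a$ and $(1-q^{an})^{-k}\leq(1-q^a)^{-k}$. The main (and essentially only) delicate point is the complex exponent in the numerator. Writing $b=\gamma+i\beta$, I would set $q^{an+b+j}=q^{an+\gamma+j}e^{i\beta\log q}$, so that with $\rho_j:=q^{an+\gamma+j}>0$ and $\theta:=\beta\log q$,
$$\bigl|1-q^{an+b+j}\bigr|^2=1-2\rho_j\cos\theta+\rho_j^2\leq 1+2\rho_j+\rho_j^2=(1+\rho_j)^2,$$
using only $\cos\theta\geq-1$. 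Thus $\bigl|1-q^{an+b+j}\bigr|\leq 1+q^{an+\gamma+j}$, and exactly as for the denominator the inequality $an+j\geq a$ together with $0<q<1$ yields $q^{an+\gamma+j}\leq q^{a+\gamma}$ for every $j$. Consequently each factor is at most $1+q^{a+\gamma}$ and the full product is at most $(1+q^{a+\gamma})^k$.

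Assembling the three estimates gives the bound $\frac{[a]_q^{\,k}}{(1-q^a)^k}\bigl(1+q^{a+\gamma}\bigr)^k$, and the final step is the purely algebraic observation that $(1+q^{a+\gamma})^2=1+2q^{a+\gamma}+q^{2a+2\gamma}$, so that $1+q^{a+\gamma}=\sqrt{1+q^{2a+2\gamma}+2q^{a+\gamma}}$; this puts the estimate in precisely the stated form. I expect no serious obstacle here: the argument is a sequence of elementary comparisons, and the only things to watch are keeping track of the complex modulus correctly and verifying that all the positivity and monotonicity facts ($[a]_q>0$, $1-q^{an}>0$, and the exponent comparison $q^{an+\gamma+j}\leq q^{a+\gamma}$) hold uniformly for $0<q<1$, $a>0$, $n\geq1$ and for every sign of $\gamma$.
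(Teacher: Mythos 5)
Your proposal is correct and follows essentially the same route as the paper's proof: the same factorization pulling out $[a]_q^k/(1-q^{an})^k$, the same bound $1-q^{an}\geq 1-q^a$, the same modulus computation with the cosine estimate $\cos\theta\geq -1$, and the same monotonicity $q^{an+\gamma+j}\leq q^{a+\gamma}$. The only (harmless) cosmetic difference is that you simplify the resulting bound to $(1+q^{a+\gamma})^k$ and then note it equals the stated square-root expression, whereas the paper keeps the square-root form throughout.
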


\begin{proof} We notice that  $1> q^a \geq q^{an}>0$ for $n\ge 1,$ then
$$
0<\frac{1}{1-q^{an}}\leq \frac{1}{1-q^{a}}, \qquad n\ge 1.
$$
We are going to use  (\ref{q-Number}) and this inequality  to prove the result, so

\begin{align*}
&\left|\frac{\left(q^{an+b} ; q\right)_{k}}{(1-q)^{k} [n]_{q^{a}}^{k}}\right|
=\frac{[a]_{q}^{k}}{\left(1-q^{a n}\right)^{k}} \prod_{j=0}^{k-1}\left|1-q^{a n+\gamma+j+i\beta}\right| \\
\leq&\frac{[a]_{q}^{k}}{\left(1-q^{a}\right)^{k}} \prod_{j=0}^{k-1}\left|1-q^{a n+\gamma+j+i\beta}\right|
=\frac{[a]_{q}^{k}}{\left(1-q^{a}\right)^{k}} \prod_{j=0}^{k-1}\left|1-q^{a n+\gamma+j}e^{i\beta \ln(q)}\right| \\
=&\frac{[a]_{q}^{k}}{\left(1-q^{a}\right)^{k}} \prod_{j=0}^{k-1}\sqrt{1+q^{2(a n+\gamma+j)}-2q^{a n+\gamma+j}\cos(\beta\ln(q))} \\
\leq&\frac{[a]_{q}^{k}}{\left(1-q^{a}\right)^{k}} \prod_{j=0}^{k-1}\sqrt{1+q^{2(a n+\gamma+j)}+2q^{a n+\gamma+j}}
\leq\frac{[a]_{q}^{k}}{\left(1-q^{a}\right)^{k}} \left(\sqrt{1+q^{2(a+\gamma)}+2q^{a+\gamma}}\right)^k.
\end{align*}
\end{proof}

Next, we use the notation $\mathbb{Z}_{-}$ for the set formed by the number $0$ and the negative integers, i.e. $\mathbb{Z}_{-}=\{0,-1,-2, \ldots \}.$

\begin{prop}\label{lower-bound}
Let $a$ be a positive real number and $b=\gamma+i\beta$ a complex number.  We assume that $an+\gamma \notin \mathbb{Z}_{-}$  for all $n$ positive integer. Then,  it exists  $\varepsilon>0$ such that for $n\ge 1$ and $k=0,1,\ldots, n$, we have
$$\left|\frac{\left(q^{an+b} ; q\right)_{k}}{(1-q)^{k}[n]_{q^{a}}^{k}}\right| \geq\left\{\begin{array}{ll}
{[a]_{q}^{k}}>0 & \text { if } \gamma \geq 0, \\ & \\
{[a]_{q}^{k} \varepsilon^{k}}>0 & \text { if } \gamma<0.
\end{array}\right.$$
\end{prop}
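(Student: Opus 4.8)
The plan is to start from the factorised form of the modulus already obtained in the proof of Proposition \ref{upper-bound}. Writing $b=\gamma+i\beta$ and using $[a]_q=(1-q^a)/(1-q)$ together with $(1-q)[n]_{q^a}=(1-q^{an})/[a]_q$, one has for $n\ge 1$ and $0\le k\le n$
\begin{equation*}
\left|\frac{\left(q^{an+b};q\right)_k}{(1-q)^k[n]_{q^a}^k}\right|=\frac{[a]_q^k}{(1-q^{an})^k}\prod_{j=0}^{k-1}\left|1-q^{an+\gamma+j+i\beta}\right|.
\end{equation*}
Since $0<q^{an}<1$ we have $(1-q^{an})^{-k}\ge 1$, so the whole task reduces to bounding the product $P_k:=\prod_{j=0}^{k-1}\left|1-q^{an+\gamma+j+i\beta}\right|$ from below; the prefactor $[a]_q^k$ is already positive because $a>0$ and $0<q<1$. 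The case $k=0$ is trivial (empty product equal to $1$), so assume $k\ge 1$.

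For $\gamma\ge 0$ I would argue directly. For every $j\ge 0$ the real part satisfies $an+\gamma+j\ge an>0$, hence $\left|q^{an+\gamma+j+i\beta}\right|=q^{an+\gamma+j}\le q^{an}<1$, and the reverse triangle inequality gives $\left|1-q^{an+\gamma+j+i\beta}\right|\ge 1-q^{an+\gamma+j}\ge 1-q^{an}$. Multiplying the $k$ factors yields $P_k\ge(1-q^{an})^k$, and therefore the modulus is $\ge[a]_q^k$, which is the claimed bound in this regime.

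For $\gamma<0$ the point is to produce a single $\varepsilon>0$, independent of $n$ and $k$, with $\left|1-q^{an+\gamma+j+i\beta}\right|\ge\varepsilon$ for all admissible indices; then $P_k\ge\varepsilon^k$ and the bound $[a]_q^k\varepsilon^k$ follows exactly as above. Set $t=an+\gamma+j$; as $n\ge 1$ and $0\le j\le n-1$ these exponents lie in $[a+\gamma,\infty)$. I would split the factors at a threshold $M$ chosen so that $q^M\le 1/2$. For $t>M$ the reverse triangle inequality gives $\left|1-q^{t+i\beta}\right|\ge 1-q^t>1-q^M\ge 1/2$. For $a+\gamma\le t\le M$ there are only finitely many pairs $(n,j)$, because $t\le M$ forces $an\le M-\gamma$, bounding $n$, and then $0\le j\le n-1$; hence the finitely many values $\left|1-q^{t+i\beta}\right|$ arising here admit a positive minimum $\varepsilon_1$ (if no such pair exists, the tail bound alone suffices). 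This last step is precisely where the hypothesis enters: $\left|1-q^{t+i\beta}\right|=0$ forces $\left|q^{t+i\beta}\right|=q^t=1$, i.e. $t=0$, and $t=an+\gamma+j=0$ would give $an+\gamma=-j\in\mathbb{Z}_{-}$, contradicting $an+\gamma\notin\mathbb{Z}_{-}$. Taking $\varepsilon=\min\{\varepsilon_1,1/2\}$ then completes the argument.

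The only genuinely delicate part is this uniform lower bound in the case $\gamma<0$: one must rule out the factors $\left|1-q^{an+\gamma+j+i\beta}\right|$ clustering towards $0$ as $n$ and $j$ vary. The resolution is the observation that the exponent $t=an+\gamma+j$ stays bounded below by $a+\gamma$ and can approach the only dangerous value $t=0$ for merely finitely many index pairs; a finite minimum over that set, combined with the nonvanishing guaranteed by $an+\gamma\notin\mathbb{Z}_{-}$, yields the uniform $\varepsilon$. Note that it is essential to minimise over the \emph{discrete} set of admissible exponents rather than over the interval $[a+\gamma,\infty)$, since the continuous function $t\mapsto\left|1-q^{t+i\beta}\right|$ may vanish at $t=0$ when $0$ lies in that interval.
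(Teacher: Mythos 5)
Your proof is correct and takes essentially the same route as the paper: the same factorization of the modulus, the same use of the reverse triangle inequality, the same case split on the sign of $\gamma$, and, for $\gamma<0$, the same decomposition into a tail where each factor is uniformly bounded below plus a finite set of index pairs on which positivity follows from the hypothesis $an+\gamma\notin\mathbb{Z}_{-}$, with $\varepsilon$ obtained as a minimum. The only difference is cosmetic: you threshold on the exponent $t=an+\gamma+j$ (choosing $M$ with $q^{M}\le 1/2$), whereas the paper factors out $q^{-\gamma}$ and thresholds on $n$ (choosing $n_{0}$ with $q^{an}<q^{-\gamma}/2$ for $n>n_{0}$).
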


\begin{proof}
To prove this lower bound, we will use the well--known inequality $|z-w|\geq\big||z|-|w| \big|$ where $z$ and $w$ are complex numbers, and the equality
$$\left|q^{a n+\gamma+j+i\beta}\right|=q^{a n+\gamma+j}.$$
Then, we have
\begin{align*}
\left|\frac{\left(q^{an+b} ; q\right)_{k}}{(1-q)^{k}[n]_{q^{a}}^{k}}\right|
&=\frac{[a]_{q}^{k}}{\left(1-q^{an}\right)^{k}} \prod_{j=0}^{k-1}\left|1-q^{a n+\gamma+j+i\beta}\right|
=[a]_{q}^{k} \prod_{j=0}^{k-1} \frac{\left|1-q^{an+\gamma+j+i\beta} \right|}{1-q^{a n}}\\
&\ge [a]_{q}^{k} \prod_{j=0}^{k-1} \frac{\left|1-\left|q^{an+\gamma+j+i\beta} \right| \right|}{1-q^{a n}} =[a]_{q}^{k} \prod_{j=0}^{k-1} \frac{\left|1-q^{an+\gamma+j} \right|}{1-q^{a n}}.
\end{align*}

Now, we distinguish two cases:
\begin{itemize}
  \item When $\gamma\geq0$, taking into account $q^{an}\geq q^{an+\gamma+j}$, we obtain that
  $$
  \prod_{j=0}^{k-1} \frac{1-q^{an+\gamma+j}}{1-q^{a n}}\ge 1,$$ and the result follows.

\item When $\gamma<0$, we have
\begin{equation*}
[a]_{q}^{k} \prod_{j=0}^{k-1} \frac{\left|1-q^{an+\gamma+j} \right|}{1-q^{a n}}
=\frac{[a]_{q}^{k}}{q^{-k\gamma}} \prod_{j=0}^{k-1} \frac{\left|q^{-\gamma}-q^{an+j} \right|}{1-q^{a n}}
\geq[a]_{q}^{k} \prod_{j=0}^{k-1} \left|q^{-\gamma}-q^{an+j} \right|.
\end{equation*}

Now, on the one hand, assuming $a$ and $b$ fixed, it exits a positive integer $n_0$ that depends on $\gamma$ such that for all $n>n_0$, we have
$$q^{-\gamma}>\frac{q^{-\gamma}}{2}>q^{an}\geq q^{an+j}>0,$$
with $j\in\{0,1,\dots,k-1\}$. So, we get
$$0<\frac{q^{-\gamma}}{2}=q^{-\gamma}-\frac{q^{-\gamma}}{2}<q^{-\gamma}-q^{an}\leq q^{-\gamma}-q^{an+j}.$$
Thus, we can affirm that for $n> n_0,$
$$[a]_{q}^{k} \prod_{j=0}^{k-1} \left|q^{-\gamma}-q^{an+j} \right|
\geq[a]_{q}^{k} \frac{q^{-k\gamma}}{2^k}.$$

On the other hand, for $n\in\{1,2,\dots,n_0\},$ $k=0, \ldots, n$, and assuming  $an+\gamma \notin \mathbb{Z}_{-}$  with $n$ any positive integer, we have that  $-\gamma \neq a n+j$ for any $j$ nonnegative integer. Thus, we define
\begin{align*}
\Delta=\left\{1,|q^{-\gamma}-q^{2a}|,|q^{-\gamma}-q^{2a+1}|,|q^{-\gamma}-q^{3a}|,|q^{-\gamma}-q^{3a+1}|,
|q^{-\gamma}-q^{3a+2}|,\dots,|q^{-\gamma}-q^{n_0a+n_0-1}|\right\}.
\end{align*}
Taking $\delta:=\min\Delta>0$, we obtain
$$[a]_{q}^{k} \prod_{j=0}^{k-1} \left|q^{-\gamma}-q^{an+j} \right|
\geq[a]_{q}^{k} \delta^k.$$
Finally, if we define $\varepsilon:=\min\{\frac{q^{-\gamma}}{2},\delta\}>0$ the result holds for $n\geq1$.
\end{itemize}

\end{proof}

\begin{nota}
Under the assumptions posed in the previous propositions, we have  an upper bound and a lower bound for the quantity
$\left|\frac{\left(q^{an+b} ; q\right)_{k}}{(1-q)^{k}[n]_{q^{a}}^{k}}\right|.$ For our purposes, and without loss of generality, we can affirm that there are two constants, $\mathfrak{C}_a$ and $\mathfrak{D}_a$, independent of n, so that
\begin{equation}\label{inequality2}0<\mathfrak{C}^k_a\leq\left|\frac{\left(q^{an+b} ; q\right)_{k}}{(1-q)^{k}[n]_{q^{a}}^{k}}\right|\leq \mathfrak{D}^k_a.\end{equation}
\end{nota}

\section{Main result}

In this section we obtain the main goal of this paper: the  Mehler--Heine asymptotics for some $q$-hypergeometric polynomials. Before stating this result,  we still
have to take a further step on this issue giving a relation for the $q$-Bessel function  (\ref{bessel-q-2}).

\begin{prop} \label{prop-bqg}
Let $\alpha\in \mathbb{R}\backslash \mathbb{Z}_{-}$ be. Then,
\begin{equation*}\label{q-bessel-relation}
\Gamma_{q}(\alpha)z^{\frac{1-\alpha}{2}} J_{\alpha-1}^{(2)}(2 \sqrt{z}(1-q) ; q)
=\ _0\phi_{1}\left(\begin{array}{c}
                                                                                                                                             - \\
                                                                                                                                               q^{\alpha}
                                                                                                                                             \end{array}; q,-z(q-1)^{2} q^{\alpha}\right).
\end{equation*}
\end{prop}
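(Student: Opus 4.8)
The plan is to unfold both sides using the definitions and match the resulting series term by term. On the right-hand side, I would start from the defining series of ${}_0\phi_1$ in \eqref{q-basic}. Here $r=0$, $s=1$, so the exponent $1+s-r=2$, and the series reads
\begin{equation*}
{}_0\phi_{1}\left(\begin{array}{c} - \\ q^{\alpha} \end{array}; q, w\right)
=\sum_{k=0}^{\infty} \frac{1}{\left(q^{\alpha} ; q\right)_{k}}(-1)^{2k} q^{2\binom{k}{2}} \frac{w^{k}}{(q ; q)_{k}},
\end{equation*}
with $w=-z(q-1)^{2} q^{\alpha}$. Since $(-1)^{2k}=1$, the right-hand side becomes $\sum_{k=0}^\infty q^{2\binom{k}{2}} (-z(q-1)^2 q^\alpha)^k / \big((q^\alpha;q)_k (q;q)_k\big)$.

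Next I would expand the left-hand side. Substituting the definition \eqref{bessel-q-2} of $J_{\alpha-1}^{(2)}$ with argument $2\sqrt{z}(1-q)$ and order $\alpha-1$, the prefactor $(z/2)^{\alpha-1}$ combines with the external $z^{(1-\alpha)/2}$ and with the factors coming from the ${}_0\phi_1$ inside $J_{\alpha-1}^{(2)}$. The internal ${}_0\phi_1$ in \eqref{bessel-q-2} is evaluated at $-q^\alpha (2\sqrt z (1-q))^2/4 = -q^\alpha z (1-q)^2$, which matches the argument $w$ on the right up to the sign convention $(q-1)^2=(1-q)^2$. The key cancellation to verify is that the combination $\frac{(q^{\alpha};q)_\infty}{(q;q)_\infty}\big(\frac{z}{2}(1-q)^2\big)^{?}$ together with $\Gamma_q(\alpha)$ collapses to $1$; here I would invoke \eqref{q-gamma}, which gives $\Gamma_q(\alpha)=\frac{(q;q)_\infty}{(q^\alpha;q)_\infty}(1-q)^{1-\alpha}$, so that $\Gamma_q(\alpha)\cdot\frac{(q^{\alpha};q)_\infty}{(q;q)_\infty}=(1-q)^{1-\alpha}$. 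Tracking the powers of $z$, of $2$, and of $(1-q)$ should show that all prefactors reduce to $1$, leaving exactly the same ${}_0\phi_1$ series on both sides.

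Concretely, the main bookkeeping is the power of $(1-q)$: from $z^{(1-\alpha)/2}$ and $J_{\alpha-1}^{(2)}$ I collect $(1-q)^{\alpha-1}$ (from $(\sqrt z(1-q))^{\alpha-1}$), the factor $z^{(\alpha-1)/2}$ cancels against $z^{(1-\alpha)/2}$, and $2^{\alpha-1}/2^{\alpha-1}=1$; then $\Gamma_q(\alpha)$ supplies the reciprocal factor $(1-q)^{1-\alpha}$ and the ratio of $q$-Pochhammer symbols at infinity, so everything outside the remaining ${}_0\phi_1$ multiplies to $1$. The remaining internal series is literally the right-hand side.

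I expect the main obstacle to be purely organizational rather than conceptual: correctly matching the half-integer power $z^{(1-\alpha)/2}$ with the $\alpha$-dependent power $(z/2)^{\alpha-1}$ inside \eqref{bessel-q-2}, and being careful that the argument scaling $2\sqrt z(1-q)$ feeds the factor $-q^{\alpha}z^2/4$ so as to reproduce $-z(q-1)^2 q^\alpha$ after squaring. Once the prefactor identity $\Gamma_q(\alpha)\frac{(q^{\alpha};q)_\infty}{(q;q)_\infty}(1-q)^{1-\alpha}=1$ is used, the equality of the two ${}_0\phi_1$ series is immediate and the proof closes.
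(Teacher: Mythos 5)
Your proposal is correct and follows essentially the same route as the paper: both unfold the definition of $J_{\alpha-1}^{(2)}$, use $\Gamma_{q}(\alpha)\frac{(q^{\alpha};q)_{\infty}}{(q;q)_{\infty}}=(1-q)^{1-\alpha}$ to collapse the prefactors, and identify the remaining series with the $_0\phi_1$ on the right-hand side. The only (immaterial) difference is that the paper first proves the identity for the unscaled argument $2\sqrt{z}$ and then substitutes $\sqrt{z}\to\sqrt{z}(1-q)$ at the end, whereas you plug the scaled argument in from the start.
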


\begin{proof}
From (\ref{bessel-q-2}), we have
\begin{eqnarray*}
J_{\alpha-1}^{(2)}(x ; q)&=&\frac{\left(q^{\alpha} ; q\right)_{\infty}}{(q ; q)_{\infty}}\left(\frac{x}{2}\right)^{\alpha-1} \ _0\phi_{1}\left(\begin{array}{c}
                                                                                                                                               - \\
                                                                                                                                               q^{\alpha}
                                                                                                                                             \end{array}
 ; q, \frac{-q^{\alpha}x^{2}}{4}\right)\\
&=&\frac{\left(q^{\alpha} ; q\right)_{\infty}}{\left(q; q\right)_{\infty}}\left(\frac{x}{2}\right)^{\alpha-1} \sum_{k=0}^{\infty} q^{2\binom{k}{2}}\frac{(-1)^{k} x^{2 k} q^{\alpha k}}{4^k\left(q^{\alpha} ; q\right)_{k}(q ; q)_{k}}.
\end{eqnarray*}
Then, making the change $x^2=4z$ and introducing the factor $(1-q)^{1-\alpha}$,  we have the following identity,
$$\frac{(q ; q)_{\infty}}{\left(q^{\alpha} ; q\right)_{\infty}}(1-q)^{1-\alpha} z^{\frac{1-\alpha}{2}} J_{\alpha-1}^{(2)}(2 \sqrt{z} ; q)=
(1-q)^{1-\alpha} \sum_{k=0}^{\infty} \frac{q^{2\binom{k}{2}}(-1)^{k} z^{k}q^{\alpha k}}{\left(q^{\alpha} ; q\right)_{k}(q ; q)_{k}}. $$
Now, using (\ref{q-basic}) and (\ref{q-gamma}), we get
$$\Gamma_q(\alpha) z^{\frac{1-\alpha}{2}} J_{\alpha-1}^{(2)}(2 \sqrt{z} ; q)=
(1-q)^{1-\alpha}\  _0\phi_{1}\left(\begin{array}{l}
- \\
q^{\alpha}
\end{array} ; q, -zq^{\alpha}\right). $$
Finally, it is enough to make the change of variable $\sqrt{z}\rightarrow\sqrt{z}(1-q)$  to end the proof.
\end{proof}

We have all the ingredients to establish our main result.

\begin{theo}[\textbf{Mehler--Heine asymptotics}] \label{th-MH}
We use the notation from (\ref{notation1}-\ref{notation3}), assuming that $\alpha\in \mathbb{R}\backslash \mathbb{Z}_{-}$, $a_j>0$, $c_{j}>0$ and that $b_j$ and $d_{j}$ are complex numbers satisfying $a_j n+\mathrm{Re}(b_j) \notin \mathbb{Z}_{-}$ and $c_{j}n+\mathrm{Re}(d_{j}) \notin \mathbb{Z}_{-}$  with $j\in\{1,2,\dots,s-1\}$ and $s\geq2$. Then,
\begin{align}\nonumber
&\lim_{n\to+\infty} \ _s\phi_{s}\left(\begin{array}{l}
q^{-n},q^{\mathfrak{a}_sn+\mathfrak{b}_s} \\
q^{\alpha},q^{\mathfrak{c}_sn+\mathfrak{d}_s}
\end{array} ; q, \frac{q^{n+\alpha}[n]_{q^{\mathfrak{c}_s}}}{[n]_q[n]_{q^{\mathfrak{a}_s}}}(q-1)z\right)\\ \label{MH}
&= \left( \frac{[\mathfrak{a}_s]_q}{[\mathfrak{c}_s]_q}z\right)^{\frac{1-\alpha}{2}} \Gamma_{q}(\alpha) J_{\alpha-1}^{(2)}\left(2 (1-q) \sqrt{\frac{[\mathfrak{a}_s]_q}{[\mathfrak{c}_s]_q}z} ; q\right),
\end{align}
uniformly on compact subsets of the complex plane.  For $s=1,$ it holds
\begin{equation}\label{case1}
\lim_{n\to+\infty} \ _1\phi_{1}\left(\begin{array}{l}
q^{-n} \\
q^{\alpha},
\end{array} ; q, \frac{q^{n+\alpha}}{[n]_q}(q-1)z\right)= z^{\frac{1-\alpha}{2}} \Gamma_{q}(\alpha) J_{\alpha-1}^{(2)}\left(2 (1-q) \sqrt{z} ; q\right).
\end{equation}
\end{theo}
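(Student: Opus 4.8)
The plan is to expand both sides as power series in $z$ and match them coefficient by coefficient, then justify the interchange of limit and summation. Since one of the upper parameters is $q^{-n}$, the factor $(q^{-n};q)_k$ vanishes for $k>n$, so by (\ref{q-basic}) (with $r=s$, hence $1+s-r=1$) the left-hand side is the finite sum
\begin{equation*}
\sum_{k=0}^{n}\frac{(q^{-n};q)_k\,(q^{\mathfrak{a}_sn+\mathfrak{b}_s};q)_k}{(q^{\alpha};q)_k\,(q^{\mathfrak{c}_sn+\mathfrak{d}_s};q)_k}(-1)^k q^{\binom{k}{2}}\frac{1}{(q;q)_k}\left(\frac{q^{n+\alpha}[n]_{q^{\mathfrak{c}_s}}}{[n]_q[n]_{q^{\mathfrak{a}_s}}}(q-1)\right)^k z^k,
\end{equation*}
whose $k$-th coefficient I call $L_k^{(n)}$. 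On the right-hand side I would apply Proposition \ref{prop-bqg} with $z$ replaced by $Z:=([\mathfrak{a}_s]_q/[\mathfrak{c}_s]_q)\,z$ to rewrite the target as a single ${}_0\phi_1$; expanding it by (\ref{q-basic}) (now $1+s-r=2$) gives the infinite series $\sum_{k\ge0}T_k z^k$ with $T_k=(-1)^k q^{2\binom{k}{2}}(q-1)^{2k}q^{\alpha k}([\mathfrak{a}_s]_q/[\mathfrak{c}_s]_q)^k/\big((q^{\alpha};q)_k(q;q)_k\big)$.

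The core computation is the termwise limit $\lim_{n}L_k^{(n)}=T_k$. Here I would insert normalizing factors so that each $n$-dependent $q$-Pochhammer appears in the form controlled by the lemmas: $(q^{-n};q)_k/([n]_q^k q^{-nk})$ via (\ref{limit-qn}), and each factor $(q^{a_jn+b_j};q)_k/((1-q)^k[n]_{q^{a_j}}^k)$ as well as $(q^{c_jn+d_j};q)_k/((1-q)^k[n]_{q^{c_j}}^k)$ via (\ref{limit-qanb}), using the product notation (\ref{notation2})-(\ref{notation3}). The whole point of the chosen scaling is that the $n$-dependent quantities $[n]_q^k q^{-nk}$, $[n]_{q^{\mathfrak{a}_s}}^k$ and $[n]_{q^{\mathfrak{c}_s}}^k$ cancel exactly against the scaling sequence in the variable; what survives is $(-1)^k q^{\binom{k}{2}}(1-q)^k[\mathfrak{a}_s]_q^k$ from the numerator and $[\mathfrak{c}_s]_q^k$ from the denominator, which combine with the explicit $(-1)^k q^{\binom{k}{2}}(q-1)^k/(q;q)_k$ to produce exactly $T_k$. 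The mechanism worth emphasizing is that the two separate factors $q^{\binom{k}{2}}$ (one from $(q^{-n};q)_k$, one explicit) merge into the $q^{2\binom{k}{2}}$ characteristic of a ${}_0\phi_1$, while $(1-q)^k(q-1)^k=(-1)^k(1-q)^{2k}$ supplies the squared $(q-1)^{2k}$.

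The main obstacle is justifying the interchange of the limit $n\to\infty$ with the summation, since the number of terms $n+1$ itself grows with $n$. For this I would invoke a Tannery/dominated-convergence argument: using the uniform bound (\ref{inequality1}), namely $|(q^{-n};q)_k/([n]_q^kq^{-nk})|\le q^{\binom{k}{2}}$, together with the upper and lower estimates collected in (\ref{inequality2}) (upper bounds for the numerator factors, and lower bounds $\mathfrak{C}_{c_j}^k>0$ for the denominator factors, which hold precisely because $c_jn+\operatorname{Re}(d_j)\notin\mathbb{Z}_{-}$), the same cancellation shows $|L_k^{(n)}|\le C\,q^{2\binom{k}{2}}R^k|z|^k$ for constants $C,R$ independent of $n$, after also noting that $(q;q)_k\ge(q;q)_\infty>0$ and that $|(q^{\alpha};q)_k|$ is bounded below since $\alpha\notin\mathbb{Z}_{-}$. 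Because $q^{2\binom{k}{2}}$ decays super-exponentially, the series $\sum_k C\,q^{2\binom{k}{2}}R^k|z|^k$ converges and is bounded uniformly for $z$ in any compact set, which both licenses passing the limit inside the sum and yields the asserted uniform convergence on compacta. Finally, the case $s=1$ follows from the identical computation with the empty products in (\ref{notation2})-(\ref{notation3}) equal to $1$, so that $[\mathfrak{a}_s]_q/[\mathfrak{c}_s]_q=1$ and (\ref{MH}) collapses to (\ref{case1}).
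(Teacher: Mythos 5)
Your proposal is correct and follows essentially the same route as the paper's own proof: the same finite-sum expansion with termwise limits computed via Lemmas \ref{lemma3} and \ref{lemma4}, the same domination argument built from (\ref{inequality1}) and (\ref{inequality2}) to pass the limit through the sum (the paper phrases this as Lebesgue's dominated convergence with respect to counting measure, which is exactly your Tannery argument), and the same final identification of the limit series through Proposition \ref{prop-bqg}. The only cosmetic differences are that you invoke Proposition \ref{prop-bqg} at the outset to define target coefficients rather than at the end, and that you bound $(q;q)_k$ and $\left|(q^{\alpha};q)_k\right|$ below by constants to obtain an explicit dominating series where the paper keeps these factors and applies the ratio test.
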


\begin{proof} First, we observe that the quantities $a_jn+b_j $ and $c_{j}n+d_{j}$, for $j=1, \ldots, s-1$ and $s\geq2$, satisfy the hypothesis  posed in Proposition \ref{lower-bound}.

Now, scaling the variable $z$ in (\ref{q-basic}) in the following way $\displaystyle z\to \frac{q^{n+\alpha}[n]_{q^{\mathfrak{c}_s}}}{[n]_q[n]_{q^{\mathfrak{a}_s}}}(q-1)z$,  we get
\begin{align*}
&\ _s\phi_{s}\left(\begin{array}{l}
q^{-n},q^{\mathfrak{a}_sn+\mathfrak{b}_s} \\
q^{\alpha},q^{\mathfrak{c}_sn+\mathfrak{d}_s}
\end{array} ; q, \frac{q^{n+\alpha}[n]_{q^{\mathfrak{c}_s}}}{[n]_q[n]_{q^{\mathfrak{a}_s}}}(q-1)z\right)\\
&=\sum_{k=0}^{n}\frac{\left(q^{-n},q^{\mathfrak{a}_sn+\mathfrak{b}_s};q\right)_k}
{\left(q^{\alpha},q^{\mathfrak{c}_sn+\mathfrak{d}_s};q\right)_k}
(-1)^k q^{\binom{k}{2}}\frac{ q^{(n+\alpha) k}[n]_{q^{\mathfrak{c}_s}}^k (q-1)^kz^k}
{[n]_q^k[n]_{q^{\mathfrak{a}_s}}^k(q;q)_k}\\
&:=\sum_{k=0}^{n} g_{n,k}(z).
\end{align*}

Using (\ref{limit-qn}-\ref{limit-qanb}), we have for $k$ fixed
\begin{equation} \label{limit-proof-MH}
\lim_{n\to+\infty}g_{n,k}(z)= q^{2\binom{k}{2}} (1-q)^k  \frac{[\mathfrak{a}_s]_{q}^k}{[\mathfrak{c}_s]_{q}^k}
\frac{ z^k q^{\alpha k} (-1)^k (1-q)^k}{\left(q^{\alpha};q\right)_k\left(q;q\right)_k},
\end{equation}
uniformly on compact subsets of the complex plane. Furthermore, we take $z$ on a compact subset $\Omega$ of the complex plane, so $|z|\le \mathfrak{C}_{\Omega}.$ Then, for $n\ge 0 $ and  $0\le k\le n,$ we get
\begin{equation}\label{leb}
\left| g_{n,k}(z) \right|
\leq\frac{ \mathfrak{D}_{a_1}^k \mathfrak{D}_{a_2}^k \cdots \mathfrak{D}_{a_{s-1}}^k }
{ \mathfrak{C}_{c_1}^k \mathfrak{C}_{c_2}^k \cdots \mathfrak{C}_{c_{s-1}}^k }
\frac{q^{2\binom{k}{2}} q^{\alpha k} (1-q)^k}{\left|\left(q^{\alpha};q\right)_k\right|(q;q)_k} \mathfrak{C}_{\Omega}^k  : =g_k(z),
\end{equation}
where we have used (\ref{inequality1}) and (\ref{inequality2}). Thus, we have found a dominant for  $\sum_{k=0}^{n} g_{n,k}(z).$ This dominant  is convergent, i.e. the series $
\sum_{k=0}^{+\infty}g_k(z)$ 
converges. We can see this by applying the  D'Alembert (or quotient) criterion for series of nonnegative terms.

Now,  to apply the Lebesgue's dominated convergence theorem, for $n,k\geq0$ we define $\mathcal{F}_{n,k}(z)$ as
$$\mathcal{F}_{n,k}(z):=\left\{
                          \begin{array}{ll}
                            g_{n,k}(z), & \hbox{if $0\leq k \leq n$;} \\
                            0, & \hbox{if $n<k$.}
                          \end{array}
                        \right.
$$
Then, using (\ref{limit-proof-MH}) and (\ref{leb}) and taking  $z$ on a compact subset $\Omega$ of the complex plane, we have  for each $k$
\begin{align*}
\lim_{n\to+\infty}\mathcal{F}_{n,k}(z)&= q^{2\binom{k}{2}} (1-q)^k  \frac{[\mathfrak{a}_s]_{q}^k}{[\mathfrak{c}_s]_{q}^k}
\frac{ z^k q^{\alpha k} (-1)^k (1-q)^k}{\left(q^{\alpha};q\right)_k\left(q;q\right)_k},\\
\left|\mathcal{F}_{n,k}(z)\right|&\le g_k(z).
\end{align*}

Thus, we can write
$$
\sum_{k=0}^{n} g_{n,k}(z)= \int  \mathcal{F}_{n,k}(z) d\mu(k),
$$
where $d\mu(k)$ is the discrete measure with support on the nonnegative integers ($k=0,1,\dots$) and with a mass equal to one in each point of the support. Then, we apply the Lebesgue's dominated convergence theorem and (\ref{limit-proof-MH}), obtaining
\begin{align*}
&\lim_{n\to+\infty}\sum_{k=0}^{n}\frac{\left(q^{-n},q^{\mathfrak{a}_sn+\mathfrak{b}_s};q\right)_k}
{\left(q^{\alpha},q^{\mathfrak{c}_sn+\mathfrak{d}_s};q\right)_k}
(-1)^k q^{\binom{k}{2}}\frac{ q^{(n+\alpha) k}[n]_{q^{\mathfrak{c}_s}}^k (q-1)^kz^k}
{[n]_q^k[n]_{q^{\mathfrak{a}_s}}^k(q;q)_k}
= \lim_{n\to+\infty}\sum_{k=0}^{n} g_{n,k}(z)\\
=&\lim_{n\to+\infty}\int  \mathcal{F}_{n,k}(z) d\mu(k)= \int \lim_{n\to+\infty}  \mathcal{F}_{n,k}(z) d\mu(k)
=\sum_{k=0}^{+\infty}(-1)^{k} q^{2\binom{k}{2}}  \frac{[\mathfrak{a}_s]_{q}^k}{[\mathfrak{c}_s]_{q}^k}
\frac{ z^k q^{\alpha k}  (1-q)^{2k}}{\left(q^{\alpha};q\right)_k\left(q;q\right)_k}\\
=&\ _0\phi_{1}\left(\begin{array}{c}
     - \\
         q^{\alpha}
  \end{array}; q,-\frac{z[\mathfrak{a}_s]_{q}^k}{[\mathfrak{c}_s]_{q}^k}(q-1)^{2} q^{\alpha}\right).
\end{align*}
Finally, using  Proposition \ref{prop-bqg} we get (\ref{MH}).  The  proof of (\ref{case1}) is similar, but now it is not necessary to use either the limit (\ref{lemma4}) or the bounds (\ref{inequality2}).
\end{proof}

\medskip

Now, we can tackle the case $r-1\leq s.$ As we have commented previously, in this case, as far as we know, the limit function cannot be expressed as a known $q$--hypergeometric function except when $r=s$, then we get the same result as in Theorem \ref{th-MH}.

\begin{prop}\label{othcas}
We take $r\geq 1$, $s\geq1$, $r-1\leq s$, and  $\alpha\in \mathbb{R}\backslash \mathbb{Z}_{-}.$   We consider $b_j$  and $d_{\ell}$ complex numbers satisfying $a_j n+\mathrm{Re}(b_j) \notin \mathbb{Z}_{-},$  $c_{\ell}n+\mathrm{Re}(d_{\ell}) \notin \mathbb{Z}_{-}$ where $a_j>0$, $c_{\ell}>0$ with $j\in\{1,2,\dots,r-1\}$ and $\ell\in\{1,2,\dots,s-1\}.$ Then, for $r\geq2$ and $s\geq2$,
\begin{align*}&\lim_{n\to+\infty} \ _r\phi_{s}\left(\begin{array}{l}
q^{-n},q^{\mathfrak{a}_rn+\mathfrak{b}_r} \\
q^{\alpha},q^{\mathfrak{c}_sn+\mathfrak{d}_s}
\end{array} ; q, \frac{q^{n+\alpha}[n]_{q^{\mathfrak{c}_s}}}{[n]_q [n]_q{^{\mathfrak{a}_r}}}(q-1)z\right)\\
=&\sum_{k=0}^{+\infty}(-1)^{(1+r-s)k} q^{(2+s-r)\binom{k}{2}}  \frac{[\mathfrak{a}_r]_q^k}{[\mathfrak{c}_s]_q^k }
\frac{ z^k q^{\alpha k}  (1-q)^{(2+r-s)k}}{\left(q^{\alpha};q\right)_k\left(q;q\right)_k}.
\end{align*}
In addition,
\begin{align}
\label{p4-1}\lim_{n\to+\infty} \ _1\phi_{s}\left(\begin{array}{l}
q^{-n} \\
q^{\alpha},q^{\mathfrak{c}_sn+\mathfrak{d}_s}
\end{array} ; q, \frac{q^{n+\alpha}[n]_q^{\mathfrak{c}_s}}{[n]_q}(q-1)z\right)&=\sum_{k=0}^{+\infty}(-1)^{sk} q^{(s+1)\binom{k}{2}}
\frac{ z^k q^{\alpha k}  (1-q)^{(3-s)k}}{[\mathfrak{c}_s]_q^k\left(q^{\alpha};q\right)_k\left(q;q\right)_k},\\
\label{p4-2}
\lim_{n\to+\infty} \ _r\phi_{1}\left(\begin{array}{l}
q^{-n},q^{\mathfrak{a}_rn+\mathfrak{b}_r} \\
q^{\alpha}
\end{array} ; q, \frac{q^{n+\alpha}}{[n]_q [n]_q{^{\mathfrak{a}_r}}}(q-1)z\right)
&=\sum_{k=0}^{+\infty}(-1)^{rk} q^{(3-r)\binom{k}{2}}  [\mathfrak{a}_r]_q^k
\frac{ z^k q^{\alpha k}  (1-q)^{(r+1)k}}{\left(q^{\alpha};q\right)_k\left(q;q\right)_k}.
\end{align}
\end{prop}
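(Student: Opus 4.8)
The plan is to mimic the proof of Theorem \ref{th-MH} exactly, tracking how the exponent $1+s-r$ in the general definition \eqref{q-basic} propagates through the computation. First I would scale the variable in \eqref{q-basic} as indicated in each statement and write the resulting series as $\sum_{k=0}^{n} g_{n,k}(z)$, where now
$$
g_{n,k}(z)=\frac{\left(q^{-n},q^{\mathfrak{a}_rn+\mathfrak{b}_r};q\right)_k}{\left(q^{\alpha},q^{\mathfrak{c}_sn+\mathfrak{d}_s};q\right)_k}(-1)^{(1+s-r)k}q^{(1+s-r)\binom{k}{2}}\frac{z^k\,q^{(n+\alpha)k}[n]_{q^{\mathfrak{c}_s}}^k(q-1)^{(1+s-r)k}}{[n]_q^k[n]_{q^{\mathfrak{a}_r}}^k(q;q)_k}.
$$
The key observation is that the $n$-dependence concentrated in the three factors $\left(q^{-n};q\right)_k$, $\left(q^{\mathfrak{a}_rn+\mathfrak{b}_r};q\right)_k$ and $\left(q^{\mathfrak{c}_sn+\mathfrak{d}_s};q\right)_k$ (together with the chosen scaling) is handled verbatim by Lemmas \ref{lemma3} and \ref{lemma4}. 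Applying \eqref{limit-qn} to the numerator $q^{-n}$-symbol, \eqref{limit-qanb} to each of the $r-1$ numerator symbols and each of the $s-1$ denominator symbols, I obtain the pointwise limit
$$
\lim_{n\to+\infty}g_{n,k}(z)=(-1)^{(1+r-s)k}q^{(2+s-r)\binom{k}{2}}\frac{[\mathfrak{a}_r]_q^k}{[\mathfrak{c}_s]_q^k}\frac{z^k\,q^{\alpha k}(1-q)^{(2+r-s)k}}{\left(q^{\alpha};q\right)_k(q;q)_k},
$$
uniformly on compacts; the apparent sign and power changes (from $1+s-r$ to $1+r-s$, from $1+s-r$ to $2+s-r$, from $1+s-r$ to $2+r-s$) are exactly the bookkeeping produced by Lemma \ref{lemma3}'s factor $(-1)^k q^{\binom{k}{2}}(1-q)^k$ and Lemma \ref{lemma4}'s factor $[a]_q^k$, multiplied against the explicit exponent $1+s-r$.

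Next I would justify interchanging limit and summation by Lebesgue dominated convergence, precisely as in Theorem \ref{th-MH}. On a compact $\Omega$ with $|z|\le\mathfrak{C}_{\Omega}$, the bound \eqref{inequality1} controls the $\left(q^{-n};q\right)_k$ factor by $q^{\binom{k}{2}}$ and the Remark bound \eqref{inequality2} controls each $\left(q^{an+b};q\right)_k/((1-q)^k[n]_{q^a}^k)$ factor from above and below by constants independent of $n$. Collecting these gives a dominating series
$$
g_k(z)=\frac{\mathfrak{D}_{a_1}^k\cdots\mathfrak{D}_{a_{r-1}}^k}{\mathfrak{C}_{c_1}^k\cdots\mathfrak{C}_{c_{s-1}}^k}\frac{q^{(2+s-r)\binom{k}{2}}q^{\alpha k}(1-q)^{(2+r-s)k}}{\left|\left(q^{\alpha};q\right)_k\right|(q;q)_k}\mathfrak{C}_{\Omega}^k,
$$
whose convergence follows from the ratio test, using that the hypothesis $r-1\le s$ forces $2+s-r\ge 1$, so the quadratic factor $q^{(2+s-r)\binom{k}{2}}$ decays at least as fast as $q^{\binom{k}{2}}$ and overwhelms any geometric growth from the constants. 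Extending $g_{n,k}$ by zero for $k>n$ and integrating against counting measure on $\mathbb{Z}_{\ge 0}$, dominated convergence yields the series on the right-hand side of the three displayed formulas; the cases $r=1$ (no numerator $\mathfrak{a}_r$-symbols, yielding \eqref{p4-1}) and $s=1$ (no denominator $\mathfrak{c}_s$-symbols, yielding \eqref{p4-2}) just omit the corresponding Lemma \ref{lemma4} applications and the corresponding bounds, exactly as noted for \eqref{case1}.

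The only genuine obstacle is the convergence of the dominating series, and this is where the hypothesis $r-1\le s$ is essential rather than cosmetic. If instead $r-1>s$, the exponent $2+s-r$ could be $0$ or negative, the factor $q^{(2+s-r)\binom{k}{2}}$ would no longer decay (indeed would blow up), and the ratio test would fail, so no integrable dominant of this form exists — which is exactly why the proposition is restricted to $r-1\le s$ and why, as the authors remark, the limit need not be a recognizable $q$-hypergeometric function outside $r=s$. I would therefore foreground the verification $2+s-r\ge 1$ under the standing hypothesis as the load-bearing step, since everything else is a faithful transcription of the Theorem \ref{th-MH} argument with the parameter $1+s-r$ carried symbolically throughout.
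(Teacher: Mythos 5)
Your strategy coincides with the paper's own proof: write the scaled series as $\sum_{k=0}^{n}g_{n,k}(z)$, compute the pointwise limit of each term via Lemmas \ref{lemma3} and \ref{lemma4}, and repeat the dominated-convergence argument of Theorem \ref{th-MH}, with the condition $r-1\le s$ (equivalently $2+s-r\ge 1$) guaranteeing a summable dominant, exactly as Remark \ref{ocrms} indicates. Your dominant $g_k(z)$ and the ratio-test verification are correct, and are in fact spelled out in more detail than in the paper.

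However, your displayed formula for $g_{n,k}(z)$ is wrong, and the error is not cosmetic: you attached the exponent $(1+s-r)k$ to the factor $(q-1)$, whereas the scaling in the statement multiplies $z$ by $(q-1)$ to the first power and, in definition (\ref{q-basic}), the argument enters only through $z^k$. The correct term carries $(q-1)^{k}$; the exponent $1+s-r$ decorates only the sign $(-1)^{(1+s-r)k}$ and the factor $q^{(1+s-r)\binom{k}{2}}$. (You may have been misled by the $q\to1$ limit relation, where the argument is scaled by $(q-1)^{1+s-r}$.) This matters because the limit you assert does not follow from the formula you wrote: applying Lemmas \ref{lemma3} and \ref{lemma4} to your $g_{n,k}$ gives total sign $(-1)^k\cdot(-1)^{(1+s-r)k}\cdot(-1)^{(1+s-r)k}=(-1)^k$ and total $(1-q)$-exponent $\bigl(1+(r-1)-(s-1)+(1+s-r)\bigr)k=2k$, i.e.
\begin{equation*}
\lim_{n\to+\infty}g_{n,k}(z)=(-1)^{k}q^{(2+s-r)\binom{k}{2}}\frac{[\mathfrak{a}_r]_q^k}{[\mathfrak{c}_s]_q^k}\frac{z^kq^{\alpha k}(1-q)^{2k}}{\left(q^{\alpha};q\right)_k\left(q;q\right)_k},
\end{equation*}
which agrees with the proposition's right-hand side only when $r=s$. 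With the correct factor $(q-1)^k$, the count gives sign $(-1)^k\cdot(-1)^{(1+s-r)k}\cdot(-1)^{k}=(-1)^{(1+s-r)k}=(-1)^{(1+r-s)k}$ and $(1-q)$-exponent $\bigl(1+(r-1)-(s-1)+1\bigr)k=(2+r-s)k$, which is exactly the claimed limit; note also that your dominant $g_k(z)$ is the one obtained from the corrected term via (\ref{inequality1}) and (\ref{inequality2}), not from the term you displayed. So the fix is local: replace $(q-1)^{(1+s-r)k}$ by $(q-1)^k$ and carry out the sign and power count explicitly rather than asserting it as ``bookkeeping''; after that your proof matches the paper's.
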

\begin{proof}
Observe that when $r\geq2$ and $s\geq2$, we can write
\begin{align*}
& \ _r\phi_{s}\left(\begin{array}{l}
q^{-n},q^{\mathfrak{a}_rn+\mathfrak{b}_r} \\
q^{\alpha},q^{\mathfrak{c}_sn+\mathfrak{d}_s}
\end{array} ; q, \frac{q^{n+\alpha}[n]_{q^{\mathfrak{c}_s}}}{[n]_q [n]_q{^{\mathfrak{a}_r}}}(q-1)z\right)\\
=&\sum_{k=0}^{n}\frac{\left(q^{-n},q^{\mathfrak{a}_rn+\mathfrak{b}_r};q\right)_k}
{\left(q^{\alpha},q^{\mathfrak{c}_sn+\mathfrak{d}_s};q\right)_k}
(-1)^{(1+s-r)k} q^{(1+s-r)\binom{k}{2}}\frac{ q^{(n+\alpha) k}[n]_{q^{\mathfrak{c}_s}}^k (q-1)^kz^k}{[n]_q^k [n]_{q^{\mathfrak{a}_s}}^k (q;q)_k}\\
:=&\sum_{k=0}^{n} g_{n,k}^{[r,s]}(z).
\end{align*}
Moreover, we can prove for $k$ fixed that
\begin{align*}
\lim_{n\to+\infty}g_{n,k}^{[r,s]}(z)=
(-1)^{(1+r-s)k} q^{(2+s-r)\binom{k}{2}}  \frac{[\mathfrak{a}_r]_q^k}{[\mathfrak{c}_s]_q^k}
\frac{q^{\alpha k}  (1-q)^{(2+r-s)k}z^k}{\left(q^{\alpha};q\right)_k\left(q;q\right)_k}.
\end{align*}
uniformly on compact subsets of the complex plane. Then, acting like in the proof of Theorem \ref{th-MH} we can apply the Lebesgue's dominated convergence theorem, getting the result.  Finally, the asymptotic relations (\ref{p4-1}) and (\ref{p4-2}) can be obtained in the same way by handling  the notation adequately.
\end{proof}

\begin{nota} \label{ocrms}
It is worth noting that the condition $r-1\le s$ is necessary to apply the Lebesgue's dominated convergence theorem in the two previous proofs.
\end{nota}

\begin{nota}
We can observe that Theorem \ref{th-MH} recovers partially one result for hypergeometric polynomials ${ }_{r} F_{s}\left(\begin{array}{l}
a_{1}, \ldots, a_{r} \\
b_{1}, \ldots, b_{s}
\end{array} ; z\right)$ given in  \cite[Th. 1]{Bracciali-JJMB-2015}. This occurs when $r=s.$ To see this, it is enough to consider (\ref{limqnum}) and  notice that
when $q\to1$ we have
$$\lim_{q\to1}\frac{q^{n+\alpha}[n]_q^{\mathfrak{c}_s}z}{[n]_q [n]_q^{\mathfrak{a}_s}}=\frac{z}{n}. $$
Then, using (\ref{relationH-qH-r=s}), (\ref{q-gamma-limit}) and (\ref{bessel-q-2}-\ref{q-bessel-limit})    in Theorem \ref{th-MH}, we deduce Theorem 1 in \cite{Bracciali-JJMB-2015} when $r=s.$
\end{nota}

As we have mentioned previously in the introduction, one of the referees  proposed us to use the scaling $z\to q^nz$. We have obtained the following statements.
\begin{prop}\label{qn}
We use  the notation from (\ref{notation1}-\ref{notation3}), assuming that $\alpha\in \mathbb{R}\backslash \mathbb{Z}_{-}$, $a_j>0$, $c_{j}>0$ and that $b_j$ and $d_{j}$ are complex numbers satisfying $a_j n+\mathrm{Re}(b_j) \notin \mathbb{Z}_{-}$ and $c_{j}n+\mathrm{Re}(d_{j}) \notin \mathbb{Z}_{-}$  with $j\in\{1,2,\dots,s-1\}$ and $s\geq2$. Then,
\begin{align*}
\lim_{n\to+\infty} \ _s\phi_{s}\left(\begin{array}{l}
q^{-n},q^{\mathfrak{a}_sn+\mathfrak{b}_s} \\
q^{\alpha},q^{\mathfrak{c}_{s}n+\mathfrak{d}_{s}}
\end{array} ; q, q^nz\right)
&=\ _0\phi_{1}\left(\begin{array}{c}
     - \\
         q^{\alpha}
  \end{array}; q,z\right)\\
&= \left( \frac{-z}{(q-1)^2q^{\alpha}}\right)^{\frac{1-\alpha}{2}} \Gamma_{q}(\alpha) J_{\alpha-1}^{(2)}\left(2  \sqrt{\frac{-z}{q^{\alpha}}} ; q\right).
\end{align*}
For $s=1$, we have
\begin{equation}\label{p5-1}
\lim_{n\to+\infty} \ _1\phi_{1}\left(\begin{array}{l}
q^{-n}\\
q^{\alpha}
\end{array} ; q, q^nz\right)
=\ _0\phi_{1}\left(\begin{array}{c}
     - \\
         q^{\alpha}
  \end{array}; q,z\right)= \left( \frac{-z}{(q-1)^2q^{\alpha}}\right)^{\frac{1-\alpha}{2}} \Gamma_{q}(\alpha) J_{\alpha-1}^{(2)}\left(2  \sqrt{\frac{-z}{q^{\alpha}}} ; q\right),
\end{equation}
uniformly on compact subsets of the complex plane.
\end{prop}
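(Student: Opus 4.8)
The plan is to follow the proof of Theorem \ref{th-MH} almost line by line, the simplifications coming entirely from the milder scaling $z\to q^{n}z$. First I would insert this scaling into the defining series (\ref{q-basic}) (with $r=s$, so the prefactor is $(-1)^{k}q^{\binom{k}{2}}$) and write the polynomial as a finite sum $\sum_{k=0}^{n}g_{n,k}(z)$ with
\[
g_{n,k}(z)=\frac{\left(q^{-n},q^{\mathfrak{a}_sn+\mathfrak{b}_s};q\right)_k}{\left(q^{\alpha},q^{\mathfrak{c}_sn+\mathfrak{d}_s};q\right)_k}(-1)^k q^{\binom{k}{2}}\frac{q^{nk}z^k}{(q;q)_k}.
\]

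Next I would compute the pointwise limit of $g_{n,k}(z)$ for each fixed $k$. The decisive facts are that $(q^{-n};q)_k\,q^{nk}\to(-1)^k q^{\binom{k}{2}}$ (Lemma \ref{lemma3} together with $\lim_{n}[n]_q=1/(1-q)$), while the remaining $q$-Pochhammer symbols satisfy $(q^{a_jn+b_j};q)_k\to1$ and $(q^{c_jn+d_j};q)_k\to1$ as $n\to+\infty$, since $a_j,c_j>0$. Hence
\[
\lim_{n\to+\infty}g_{n,k}(z)=\frac{q^{2\binom{k}{2}}z^k}{\left(q^{\alpha};q\right)_k(q;q)_k},
\]
which is exactly the $k$-th term of ${}_0\phi_1(-;q^{\alpha};q,z)$. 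Note that, in contrast with Theorem \ref{th-MH}, no factor $[\mathfrak{a}_s]_q/[\mathfrak{c}_s]_q$ survives, precisely because the $[n]$-factors no longer appear in the scaling and the two $q$-Pochhammer symbols simply tend to $1$.

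To justify exchanging the limit with the infinite sum I would build an $n$-independent dominant on a compact set $\Omega$ with $|z|\le\mathfrak{C}_{\Omega}$, exactly as in Theorem \ref{th-MH}, using the bound $|(q^{-n};q)_k\,q^{nk}|\le q^{\binom{k}{2}}$ (obtained as in the proof of Lemma \ref{lemma5}) together with inequality (\ref{inequality2}) for the remaining factors. The single new point relative to Theorem \ref{th-MH}, where the $[n]$-factors cancelled against the scaling, is that each quotient $[n]_{q^{a_j}}/[n]_{q^{c_j}}$ stays bounded in $n$ (both sequences converge to finite positive limits), so these residual $[n]$-quotients are absorbed into constants. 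This produces $|g_{n,k}(z)|\le g_k(z)$ with $\sum_k g_k(z)<\infty$ by the ratio test (the factor $q^{2\binom{k}{2}}$ forces convergence), and Lebesgue's dominated convergence theorem against the discrete counting measure then yields $\lim_n\sum_k g_{n,k}(z)={}_0\phi_1(-;q^{\alpha};q,z)$.

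Finally I would convert ${}_0\phi_1(-;q^{\alpha};q,z)$ into the $q$-Bessel form by invoking Proposition \ref{prop-bqg} with the substitution $w=-z/((q-1)^2 q^{\alpha})$, which makes its right-hand argument equal to $z$; the only verification needed is $2\sqrt{w}\,(1-q)=2\sqrt{-z/q^{\alpha}}$, which holds because $\sqrt{(q-1)^2}=1-q$ for $0<q<1$. The case $s=1$ is handled identically but more cheaply, requiring neither Lemma \ref{lemma4} nor the bounds (\ref{inequality2}). I expect the only real obstacle to be the dominant: making sure it is genuinely free of $n$, which is exactly where the boundedness of the quotients $[n]_{q^{a_j}}/[n]_{q^{c_j}}$ must be checked, the feature that distinguishes this scaling from the one treated in Theorem \ref{th-MH}.
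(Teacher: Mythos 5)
Your proof is correct and follows essentially the same route as the paper: adapt the limits and bounds of Section \ref{sec2} to the scaling $z\to q^n z$ (namely $q^{nk}(q^{-n};q)_k\to(-1)^kq^{\binom{k}{2}}$, $(q^{an+b};q)_k\to 1$, and the bound $|q^{nk}(q^{-n};q)_k|\leq q^{\binom{k}{2}}$), rerun the dominated-convergence argument of Theorem \ref{th-MH}, and convert the resulting $_0\phi_1$ via Proposition \ref{prop-bqg}. The only cosmetic difference is that the paper re-derives direct $n$-independent bounds $0<\widehat{\mathfrak{C}^k_a}\leq\left|\left(q^{an+b};q\right)_k\right|\leq\widehat{\mathfrak{D}^k_a}$ by the technique of Propositions \ref{upper-bound} and \ref{lower-bound}, whereas you reuse (\ref{inequality2}) and absorb the bounded quotients $[n]_{q^{a_j}}/[n]_{q^{c_j}}$ into constants; the two are equivalent.
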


\begin{proof}
We need to modify appropriately some of the results from Section  \ref{sec2}. From Lemmas \ref{lemma3}-\ref{lemma5} we can deduce
\begin{align}\label{qn-1}
\lim _{n \rightarrow+\infty} q^{nk}\left(q^{-n} ; q\right)_{k}&=(-1)^{k} q^{\binom{k}{2}},\\
\label{qn-2}
\lim _{n \rightarrow+\infty} \left(q^{an+b} ; q\right)_{k}&=1,\\
\label{qn-3}
\left| q^{nk}\left(q^{-n} ; q\right)_{k}\right|\leq q^{\binom{k}{2}}, &\quad k=0,1, \ldots, n.
\end{align}
In addition, under the assumptions posed in Propositions \ref{upper-bound} and \ref{lower-bound},  we use the same technique to establish that there are two constants, $\widehat{\mathfrak{C}_a}$ and $\widehat{\mathfrak{D}_a}$, independent of $n$, satisfying
\begin{equation}\label{qn-4}0<\widehat{\mathfrak{C}^k_a}\leq\left|\left(q^{an+b} ; q\right)_{k}\right|\leq \widehat{\mathfrak{D}^k_a}.\end{equation}
From (\ref{qn-1}-\ref{qn-4}) we can prove the result in the same way as in Theorem \ref{th-MH}.  The  proof of (\ref{p5-1}) is similar, but now it is not necessary to use either the limit (\ref{qn-2}) or the bounds (\ref{qn-4}).
\end{proof}

\begin{nota}
Notice that the result in Proposition \ref{qn} does not depend on $s$. This is due to the type of scaling and to the fact that all $q$-numbers disappear in the limits (\ref{qn-1}-\ref{qn-2}).
\end{nota}

 Now, we discuss the case $r-1\leq s$ making the scaling $z\to q^nz$.

\begin{prop} \label{pro-qnrs}
We take $r\geq1$, $s\geq1$, $r-1\leq s$, and  $\alpha\in \mathbb{R}\backslash \mathbb{Z}_{-}.$   We consider $b_j$  and $d_{\ell}$ complex numbers satisfying $a_j n+\mathrm{Re}(b_j) \notin \mathbb{Z}_{-},$  $c_{\ell}n+\mathrm{Re}(d_{\ell}) \notin \mathbb{Z}_{-}$ where $a_j>0$, $c_{\ell}>0$ with $j\in\{1,2,\dots,r-1\}$ and $\ell\in\{1,2,\dots,s-1\}.$ Then,  for $r\geq2$ and $s\geq2$,
\begin{align*}&\lim_{n\to+\infty} \ _r\phi_{s}\left(\begin{array}{l}
q^{-n},q^{\mathfrak{a}_rn+\mathfrak{b}_r} \\
q^{\alpha},q^{\mathfrak{c}_sn+\mathfrak{d}_s}
\end{array} ; q, q^nz\right)
=\sum_{k=0}^{+\infty}
\frac{(-1)^{(r-s)k} q^{(2+s-r)\binom{k}{2}}}{\left(q^{\alpha};q\right)_k\left(q;q\right)_k}z^k.
\end{align*}
In addition,
\begin{align*}
\lim_{n\to+\infty} \ _1\phi_{s}\left(\begin{array}{l}
q^{-n} \\
q^{\alpha},q^{\mathfrak{c}_sn+\mathfrak{d}_s}
\end{array} ; q, q^nz\right)
&=\sum_{k=0}^{+\infty}
\frac{(-1)^{(1-s)k} q^{(s+1)\binom{k}{2}}}{\left(q^{\alpha};q\right)_k\left(q;q\right)_k}z^k,\\
\lim_{n\to+\infty} \ _r\phi_{1}\left(\begin{array}{l}
q^{-n},q^{\mathfrak{a}_rn+\mathfrak{b}_r} \\
q^{\alpha}
\end{array} ; q, q^nz\right)
&=\sum_{k=0}^{+\infty}
\frac{(-1)^{(r-1)k} q^{(3-r)\binom{k}{2}}}{\left(q^{\alpha};q\right)_k\left(q;q\right)_k}z^k.\\
\end{align*}
\end{prop}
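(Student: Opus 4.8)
The plan is to run the same machinery as in the proofs of Theorem~\ref{th-MH} and Proposition~\ref{othcas}, but with the estimates of Section~\ref{sec2} replaced by their $q^nz$-scaled versions (\ref{qn-1})--(\ref{qn-4}) established within the proof of Proposition~\ref{qn}. First I would expand the left-hand side by means of the defining series (\ref{q-basic}); since $q^{-n}$ occurs among the numerator parameters the sum truncates at $k=n$, and after the scaling $z\to q^nz$ its $k$-th term is
$$g_{n,k}^{[r,s]}(z)=\frac{\left(q^{-n},q^{\mathfrak{a}_rn+\mathfrak{b}_r};q\right)_k}{\left(q^{\alpha},q^{\mathfrak{c}_sn+\mathfrak{d}_s};q\right)_k}(-1)^{(1+s-r)k}q^{(1+s-r)\binom{k}{2}}\frac{q^{nk}z^k}{(q;q)_k}.$$
The objective is to view $\sum_{k=0}^{n}g_{n,k}^{[r,s]}(z)$ as an integral against the counting measure $d\mu(k)$ on the nonnegative integers and to interchange limit and sum through Lebesgue's dominated convergence theorem.

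For the pointwise limit at fixed $k$, the key point is that the factor $q^{nk}$ created by the scaling combines with $(q^{-n};q)_k$: by (\ref{qn-1}) one has $\lim_{n\to+\infty}q^{nk}(q^{-n};q)_k=(-1)^kq^{\binom{k}{2}}$. Each remaining $q$-Pochhammer symbol $(q^{a_jn+b_j};q)_k$ in the numerator and $(q^{c_\ell n+d_\ell};q)_k$ in the denominator tends to $1$ by (\ref{qn-2}), while $(q^\alpha;q)_k$ does not depend on $n$. Collecting the signs, $(-1)^k(-1)^{(1+s-r)k}=(-1)^{(r-s)k}$, and the $q$-powers, $q^{\binom{k}{2}}q^{(1+s-r)\binom{k}{2}}=q^{(2+s-r)\binom{k}{2}}$, reproduces exactly the $k$-th term of the announced series.

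For the domination I would bound $|g_{n,k}^{[r,s]}(z)|$ uniformly in $n$ on a compact set $\Omega$ with $|z|\le\mathfrak{C}_{\Omega}$: apply (\ref{qn-3}) to obtain $|q^{nk}(q^{-n};q)_k|\le q^{\binom{k}{2}}$, and use (\ref{qn-4}) to bound each numerator factor above by $\widehat{\mathfrak{D}^k_{a_j}}$ and each denominator factor below by $\widehat{\mathfrak{C}^k_{c_\ell}}$. This produces a dominant proportional to $q^{(2+s-r)\binom{k}{2}}\mathfrak{C}_{\Omega}^k/\big(|(q^\alpha;q)_k|\,(q;q)_k\big)$, independent of $n$. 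The main obstacle --- indeed the only genuinely delicate point --- is to secure the convergence of this dominating series, and this is exactly where the hypothesis $r-1\le s$ becomes indispensable: only then is $2+s-r\ge1>0$, so the Gaussian factor $q^{(2+s-r)\binom{k}{2}}$ decays and the D'Alembert ratio test gives convergence, in accordance with Remark~\ref{ocrms}. The two boundary cases $r=1$ and $s=1$ follow from the same reasoning after deleting the absent parameter family from the notation (\ref{notation1})--(\ref{notation3}) and keeping only the estimates (\ref{qn-2}) and (\ref{qn-4}) for the family actually present; once dominated convergence applies, the convergence is moreover uniform on compact subsets of the complex plane, as in Proposition~\ref{qn}.
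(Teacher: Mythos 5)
Your proposal is correct and follows exactly the paper's own route: the paper proves this proposition by the single remark that the argument of Proposition \ref{othcas} (series expansion, term-by-term limits, Lebesgue's dominated convergence theorem with the counting measure) goes through verbatim once the estimates of Section \ref{sec2} are replaced by the $q^nz$-scaled bounds (\ref{qn-1})--(\ref{qn-4}) from the proof of Proposition \ref{qn}, which is precisely what you carry out, including the correct identification of where $r-1\le s$ is needed (so that $2+s-r\ge 1$ makes the dominating series converge, as in Remark \ref{ocrms}). Your write-up merely makes explicit the details the paper leaves implicit; there is no gap.
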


\begin{proof}
The proof is totally similar to the one of Proposition \ref{othcas} but in this case we use (\ref{qn-1}-\ref{qn-4}).
\end{proof}

\subsection{Two classical examples}

Now, we use Theorem \ref{th-MH} and Proposition \ref{othcas} to obtain the Mehler--Heine formula for two important families of basic hypergeometric polynomials.

\begin{ejemplo}
Using this theorem we can obtain a well-known type of asymptotics for the $q$-Laguerre orthogonal polynomials given by Moak \cite[Theorem 5]{Moak-1981}, although the author uses another standardization for the polynomials. A generalization of Moak's result  was given by Ismail in  \cite[Theorem 21.8.4]{Ismail-2005} where he provided an asymptotic expansion.  These $q$-Laguerre orthogonal polynomials, which we denote by $L_n^{(\alpha)}(x;q)$, are orthogonal with respect to (see \cite[f. (21.8.4)]{Ismail-2005})
\begin{equation*}
\int_{0}^{\infty} L_{m}^{(\alpha)}(z ; q) L_{n}^{(\alpha)}(z ; q) \frac{z^{\alpha} d x}{(-z ; q)_{\infty}}=-\frac{\pi}{\sin (\pi \alpha)} \frac{\left(q^{-\alpha} ; q\right)_{\infty}}{(q ; q)_{\infty}} \frac{\left(q^{\alpha+1} ; q\right)_{n}}{q^{n}(q ; q)_{n}} \delta_{m, n},
\end{equation*}
with $\alpha>-1$ and if $\alpha=k$ with $k=0,1,2,\dots$ the right-hand side  is interpreted as
$$\left(\ln q^{-1}\right)q^{-\binom{k+1}{2}-n}\left(q^{n+1};q\right)_k \delta_{m,n}.$$

These polynomials can be written as (see \cite[f. (21.8.2)]{Ismail-2005})
\begin{equation*}\label{q-laguerre-Ismail}
L_{n}^{(\alpha)}(z ; q)=\frac{\left(q^{\alpha+1} ; q\right)_{n}}{(q ; q)_{n}} \sum_{k=0}^{n} \frac{\left(q^{-n} ; q\right)_{k}}{(q ; q)_{k}} q^{\binom{k+1}{2}} \frac{z^{k} q^{(\alpha+n) k}}{\left(q^{\alpha+1} ; q\right)_{k}}.
\end{equation*}

After some simple algebraic computations we obtain
\begin{equation*}\label{q-laguerre-koekoek}
L_{n}^{(\alpha)}(z ; q)=\frac{\left(q^{\alpha+1} ; q\right)_{n}}{(q ; q)_{n}} \ _{1}\phi_{1}\left(\begin{array}{c}
q^{-n} \\
q^{\alpha+1}
\end{array} ; q,-q^{n+\alpha+1} z\right)
\end{equation*}
Now, scaling the variable adequately and applying Theorem 1 and (\ref{q-gamma}), we get

\begin{align*}
&\lim_{n\to+\infty}L_{n}^{(\alpha)}\left(\frac{z}{(1-q)[n]_q}; q\right)
=\lim_{n\to+\infty}\frac{\left(q^{\alpha+1} ; q\right)_{n}}{(q ; q)_{n}} \ _{1}\phi_{1}\left(\begin{array}{c}
q^{-n} \\
q^{\alpha+1}
\end{array} ; q,\frac{-q^{n+\alpha+1} z}{(1-q)[n]_q}\right)\\
&=\frac{\left(q^{\alpha+1} ; q\right)_{\infty}}{(q ; q)_{\infty}}2^{\alpha}\Gamma_q(\alpha+1)\left(2\sqrt{\frac{z}{(q-1)^2}}\right)^{-\alpha}
J_{\alpha}^{(2)}\left(2\sqrt{\frac{z}{(q-1)^2}}(1-q);q\right)\\
&=\frac{\left(q^{\alpha+1} ; q\right)_{\infty}}{(q ; q)_{\infty}}\Gamma_q(\alpha+1)z^{-\alpha/2}(1-q)^{\alpha}
J_{\alpha}^{(2)}\left(2\sqrt{z};q\right)=z^{-\alpha/2}J_{\alpha}^{(2)}\left(2\sqrt{z};q\right),
\end{align*}

But, we know by Moak's (or Ismail's) result that
$$\lim_{n\to+\infty}L_{n}^{(\alpha)}\left(z; q\right)=z^{-\alpha/2}J_{\alpha}^{(2)}\left(2\sqrt{z};q\right),$$
uniformly on compact subsets of the complex plane.
Then, we have
\begin{equation*}
\label{moak}
\lim_{n\to+\infty}L_{n}^{(\alpha)}(z ; q)=\lim_{n\to+\infty}L_{n}^{(\alpha)}\left(\frac{z}{(1-q)[n]_q}; q\right)=z^{-\alpha/2}J_{\alpha}^{(2)}\left(2\sqrt{z};q\right),
\end{equation*}
uniformly on compact subsets of the complex plane.

In fact, the first equality in the above expression is expected, for example, adapting conveniently the proof of Corollary 1 in \cite{ambpr}. But, as we have seen, it is also easy to obtain it using Theorem \ref{th-MH}.
\end{ejemplo}

\medskip

\begin{ejemplo}
The Little $q$-Jacobi orthogonal polynomials are usually denoted by $p_n(z;a,b|q)$. For $0<aq<1$ and $bq<1$ these polynomials are orthogonal with respect to (see \cite[f. 14.12.2]{Koekoek-book-hyper})
$$
\sum_{k=0}^{\infty} \frac{(b q ; q)_{k}}{(q ; q)_{k}}(a q)^{k} p_{m}\left(q^{k} ; a, b | q\right) p_{n}\left(q^{k} ; a, b |q\right)
=\frac{\left(a b q^{2} ; q\right)_{\infty}}{(a q ; q)_{\infty}} \frac{(1-a b q)(a q)^{n}}{\left(1-a b q^{2 n+1}\right)} \frac{(q, b q ; q)_{n}}{(a q, a b q ; q)_{n}} \delta_{m n}.
$$
These polynomials have a basic hypergeometric representation given by (see \cite[f. 14.12.1]{Koekoek-book-hyper})
$$p_n(z;a,b|q)=\ _2\phi_{1}\left(\begin{array}{l}
q^{-n},abq^{n+1} \\
aq
\end{array} ; q, qz\right)=\ _2\phi_{1}\left(\begin{array}{l}
q^{-n},q^{n+1+\ln(ab)/\ln(q)} \\
q^{1+\ln(a)/\ln(q)}
\end{array} ; q, qz\right),$$
where we have used
\begin{equation}\label{cqn}
cq^{an+b}=q^{an+b+\ln(c)/\ln(q)},
\end{equation}
for $c>0$.
Then, using (\ref{cqn}), Proposition \ref{othcas} and scaling the variable as
$$\displaystyle qz\to \frac{q^{n+1+\ln(a)/\ln(q)}}{[n]_q^2}(q-1)z,$$ we obtain
$$\lim_{n\to+\infty}p_n\left(\frac{q^{n+1+\ln(a)/\ln(q)}}{[n]_q^2}(q-1)z;a,b|q\right)=
\sum_{k=0}^{+\infty} q^{\binom{k}{2}}
\frac{z^k q^{(1+\ln(a)/\ln(q)) k}  (1-q)^{3k}}{\left(q^{1+\ln(a)/\ln(q)};q\right)_k\left(q;q\right)_k},$$
assuming  the condition $1+\ln(a)/\ln(q)>-1$.
\end{ejemplo}

\section{Discussion about the zeros}

Throughout this section we have assumed that the assumptions of Theorem \ref{th-MH} hold.  The values $b_j$ and $d_j$ in (\ref{MH}) can be complex numbers so the $q-$hypergeometric polynomial (\ref{q-basic})
\begin{equation*}\label{poly}
\ _s\phi_{s}\left(\begin{array}{l}
q^{-n},q^{\mathfrak{a}_sn+\mathfrak{b}_s} \\
q^{\alpha},q^{\mathfrak{c}_sn+\mathfrak{d}_s}
\end{array} ; q, z(q-1)\right),
\end{equation*}
is a polynomial of degree $n$ with complex coefficients, and all its zeros can be nonreal complex. We enumerate them by $x_{q,n,k}$ with $1\leq k\leq n$, but obviously they do not have to be ordered. Taking this into consideration together with  Hurwitz’s Theorem (see \cite[Th. 1.91.3]{sz}), then there exits a limit relation between the scaled zeros of these $q$-hypergeometric polynomials and the zeros of the limit function in Theorem \ref{th-MH}. Concretely, the scaled zeros
\begin{equation} \label{scaled-zeros}
x_{q,n,k}^{\ast}:= \frac{[n]_q[n]_{q^{\mathfrak{a}_s}}}{q^{n+\alpha}[n]_{q^{\mathfrak{c}_s}}}x_{q,n,k},
\end{equation}
converge to the zeros of the function
\begin{equation}\label{limit-function}
\left( \frac{[\mathfrak{a}_s]_q}{[\mathfrak{c}_s]_q}z\right)^{\frac{1-\alpha}{2}} J_{\alpha-1}^{(2)}\left(2 (1-q) \sqrt{\frac{[\mathfrak{a}_s]_q}{[\mathfrak{c}_s]_q}z} ; q\right),
\end{equation}
when $n\to+\infty$ that we will denote by $z_{\ell}$. Moreover, when $\alpha>0$ the zeros of (\ref{limit-function}) are real and simple (see \cite[Th. 4.2]{Ismail-1982}), so the zeros $x_{q,n,k}^{\ast}$, given by (\ref{scaled-zeros}), converge to real numbers.

However, as far as we know, when $\alpha<0$ ($\alpha \notin \mathbb{Z}_{-}$)  there are not any results about the zeros of the function $z^{\frac{1-\alpha}{2}}J_{\alpha-1}^{(2)}\left(2 \sqrt{z}(1-q) ; q\right)$ analogous to the ones for the function $z^{\frac{1-\alpha}{2}}J_{\alpha-1}(2\sqrt{z})$ given, for example, in \cite[Pag. 483-484]{Watson-1922}. Thus, we have not been able to establish a result for the zeros as detailed as in Proposition 1 in \cite{Bracciali-JJMB-2015}.

Following the analysis of \cite[Remark 3]{Bracciali-JJMB-2015} (see also \cite[Pag. 483-484]{Watson-1922}), we know that the number of nonreal zeros of $z^{\frac{1-\alpha}{2}}J_{\alpha-1}(2\sqrt{z})$ depend on the value of $\alpha$.  In fact, $w^{1-\alpha}J_{\alpha-1}(w)$ has at most two purely imaginary zeros. Taking into account the change of variable $w=2\sqrt{z}$, these two   purely imaginary zeros are transformed into a negative real zero of function $z^{\frac{1-\alpha}{2}}J_{\alpha-1}(2\sqrt{z})$. Furthermore, we know exactly when these purely imaginary zeros appear.

In our framework, taking into account the numerical experiments that we have made, we believe  that the number of nonreal  zeros of $w^{1-\alpha}J_{\alpha-1}^{(2)}\left(w (1-q); q\right)$ also depends on $\alpha$ and coincides with the number of nonreal  zeros  of $w^{1-\alpha}J_{\alpha-1}(w)$. However, we feel that there is a relevant fact: the number of purely imaginary zeros can be greater than two and depends on the value of $q$ too. In this way, we observe that when $q\to1$ the number of purely imaginary zeros of the function $w^{1-\alpha}J_{\alpha-1}^{(2)}\left(w(1-q) ; q\right)$ is the same as the one of $w^{1-\alpha}J_{\alpha-1}(w)$. We are posing these comments as a conjecture.

To illustrate the previous conjecture  we are going to show some numerical experiments. We have used the symbolic computer program \ma \, language \textit{12.1.1} (also known as Wolfram language) to make them.

We take the following data:
$$s=3, \quad q=1/2, $$ and

\begin{center}
\begin{tabular}{|l|l|l|l|}
  \hline
  $a_1=3$ & $b_1=6$  & $c_1=4/3$ & $d_1=2-3i$ \\ \hline
  $a_2=5/4$ & $b_2=-2/3+2i$    & $c_2=5/6$   & $d_2=1$ \\ \hline
\end{tabular}
\end{center}
With this choice the $q$--hypergeometric polynomials have complex coefficients.  We denote a zero of the limit function (\ref{limit-function}) by  $z_{\ell}.$

\medskip
\noindent \textbf{First experiment.}
We take $\alpha=1$. In this situation, using (see \cite[Th. 4.2]{Ismail-1982}), all the zeros of (\ref{limit-function}) are real and simple.   In the next tables we show the behavior of the scaled zeros converging to the  first three positive zeros of (\ref{limit-function}).

\begin{center}
\begin{tabular}{cl}
     & $x_{1/2,n,1}^{\ast}$   \\ \hline
  $n=10$  & $1.192838457109 - 0.000373946559i$   \\
  $n=20$  & $1.191325673237 - 6.242164358333\times10^{-8}i$     \\
  $n=40$  & $1.191320585494 - 1.806507624485\times10^{-15}i$     \\ \bottomrule
  $z_{\ell}$ & $1.191320585443$  \\ \bottomrule
\end{tabular}
\end{center}

\begin{center}
\begin{tabular}{cl}
     & $x_{1/2,n,2}^{\ast}$   \\ \hline
  $n=10$  & $10.5677725749753 - 0.001544756947i$     \\
  $n=20$  & $10.5384014795907 - 2.574926397462\times10^{-7}i$     \\
  $n=40$  & $10.5383205870082 - 7.451913839285\times10^{-15}i$     \\ \bottomrule
  $z_{\ell}$ & $10.5383205862745$     \\ \bottomrule
\end{tabular}
\end{center}

\begin{center}
\begin{tabular}{cl}
     & $x_{1/2,n,3}^{\ast}$   \\ \hline
  $n=10$  & $54.673251810109 - 0.003501823800i$     \\
  $n=20$  & $54.420541178838 - 5.826491777755\times10^{-7}i$     \\
  $n=40$  & $54.419973744204 - 1.686199155483\times10^{-15}i$     \\ \bottomrule
  $z_{\ell}$ & $54.419973739632$     \\ \bottomrule
\end{tabular}
\end{center}

\medskip

\noindent \textbf{Second experiment.}
Let's take $\alpha=-51/100.$ With this value the function $w^{1-\alpha}J_{\alpha-1}(w)$ has  two  purely imaginary zeros and the rest of them are positive real zeros. This also occurs for the function $w^{1-\alpha}J_{\alpha-1}^{(2)}\left(w(1-q) ; q\right)$. Thus,   the limit function (\ref{limit-function}) has a negative zero and the rest of them  are positive real zeros. We only show the convergence to the first two positive zeros and to the negative zero of (\ref{limit-function}).

\begin{center}
\begin{tabular}{cl}
     & $x_{1/2,n,1}^{\ast}$   \\ \hline
  $n=10$  & $-0.257907814869 + 0.000069310549i$   \\
  $n=20$  & $-0.257444571262 + 1.156063483173\times10^{-8}i$     \\
  $n=40$  & $-0.257443205893 + 3.345689303361\times10^{-16}i$     \\ \bottomrule
  $z_{\ell}$ & $-0.257443205880$  \\ \bottomrule
\end{tabular}
\end{center}

\begin{center}
\begin{tabular}{cl}
    & $x_{1/2,n,2}^{\ast}$   \\ \hline
  $n=10$  & $1.717581648483 - 0.000304499740 i$     \\
  $n=20$  & $1.712978988208 - 5.076635738362\times10^{-8}i$     \\
  $n=40$  & $1.712966620706 - 1.469194883628\times10^{-15}i$     \\ \bottomrule
  $z_{\ell}$ & $1.712966620595$     \\ \bottomrule
\end{tabular}
\end{center}

\begin{center}
\begin{tabular}{cl}
   & $x_{1/2,n,3}^{\ast}$   \\ \hline
  $n=10$  & $15.162942911386 - 0.001127913056i$     \\
  $n=20$  & $15.093385289371 - 1.877379036189\times10^{-7}i$     \\
  $n=40$  & $15.093230236139 - 5.433179181139\times10^{-15}i$     \\ \bottomrule
  $z_{\ell}$ & $15.093230234896$     \\ \bottomrule
\end{tabular}
\end{center}

\medskip

\noindent \textbf{Third experiment.}
We take $\alpha=-78/10$. Then, the function $w^{1-\alpha}J_{\alpha-1}\left(w\right)$ has 16 nonreal zeros, but they are not  purely imaginary zeros, and the rest of them are positive real zeros. However, in our case the function $w^{1-\alpha}J_{\alpha-1}^{(2)}\left(w(1-q) ; q\right)$ has 16 nonreal zeros,  8 of which are  purely imaginary zeros being the rest of them positive real zeros. As we have previously commented in the conjecture, the number of purely imaginary zeros of the function $w^{1-\alpha}J_{\alpha-1}^{(2)}\left(w(1-q) ; q\right)$  can be greater than 2 according to the value of $q$. Therefore, the limit function (\ref{limit-function}) has 4 nonreal zeros and 4 negative real zeros being the rest of them positive.

\begin{center}
\begin{tabular}{cl}
   & $x_{1/2,n,1}^{\ast}$   \\ \hline
  $n=10$  & $-26.583782080249 + 13.144830793624i$   \\
  $n=20$  & $-24.217517881849 + 13.495110134590i$     \\
  $n=40$  & $-24.215268341967 + 13.495244741513i$     \\ \bottomrule
  $z_{\ell}$ & $-24.215268337780 + 13.495244740537i$  \\ \bottomrule
\end{tabular}
\end{center}

\begin{center}
\begin{tabular}{cl}
   & $x_{1/2,n,2}^{\ast}$   \\ \hline
  $n=10$  & $-26.584069012013 - 13.145175170701i$   \\
  $n=20$  & $-24.217517928700 - 13.495110193829i$     \\
  $n=40$  & $-24.215268341967 - 13.495244741517i$     \\ \bottomrule
  $z_{\ell}$ & $-24.215268337780 - 13.495244740538i$  \\ \bottomrule
\end{tabular}
\end{center}

\begin{center}
\begin{tabular}{cl}
   & $x_{1/2,n,3}^{\ast}$   \\ \hline
  $n=10$  & $-11.012428251874 + 62.145264518016i$   \\
  $n=20$  & $-6.977745534060  + 56.644723043027i$     \\
  $n=40$  & $-6.974490887747  + 56.639531189402i$     \\ \bottomrule
  $z_{\ell}$ & $-6.974490884028 + 56.639531179756i$  \\ \bottomrule
\end{tabular}
\end{center}

\begin{center}
\begin{tabular}{cl}
   & $x_{1/2,n,4}^{\ast}$   \\ \hline
  $n=10$  & $-11.012731503979 - 62.145767769738i$   \\
  $n=20$  & $-6.977745580595 - 56.644723126264i$     \\
  $n=40$  & $-6.974490887747 - 56.639531189402i$     \\ \bottomrule
  $z_{\ell}$ & $-6.974490884028 - 56.639531179756i$  \\ \bottomrule
\end{tabular}
\end{center}

\begin{center}
\begin{tabular}{cl}
   & $x_{1/2,n,5}^{\ast}$   \\ \hline
  $n=10$  & $-16.166116185151 - 0.00004242253i$   \\
  $n=20$  & $-15.859752369736 - 9.93118142529\times10^{-9}i$     \\
  $n=40$  & $-15.859327530746 - 2.87493401651\times10^{-16}i$     \\ \bottomrule
  $z_{\ell}$ & $-15.859327529012$  \\ \bottomrule
\end{tabular}
\end{center}

\begin{center}
\begin{tabular}{cl}
   & $x_{1/2,n,6}^{\ast}$   \\ \hline
  $n=10$  & $-8.372365289781 + 1.061870351232\times10^{-6}i$   \\
  $n=20$  & $-8.358589490584 + 2.877523388912\times10^{-10}i$     \\
  $n=40$  & $-8.358521100270 + 8.331309933918\times10^{-18}i$     \\ \bottomrule
  $z_{\ell}$ & $-8.358521099510$  \\ \bottomrule
\end{tabular}
\end{center}

\begin{center}
\begin{tabular}{cl}
   & $x_{1/2,n,7}^{\ast}$   \\ \hline
  $n=10$  & $-4.183493772518 + 7.184651279381\times10^{-6}i$   \\
  $n=20$  & $-4.174952644245 + 1.169874676784\times10^{-12}i$     \\
  $n=40$  & $-4.174916567642 - 8.016528868055\times10^{-20}i$     \\ \bottomrule
  $z_{\ell}$ & $-4.174916567260$  \\ \bottomrule
\end{tabular}
\end{center}

\begin{center}
\begin{tabular}{cl}
   & $x_{1/2,n,8}^{\ast}$   \\ \hline
  $n=10$  & $-2.085524807895 + 0.001212554069i$   \\
  $n=20$  & $-2.087470339683 + 2.036177391751\times10^{-7}i$     \\
  $n=40$  & $-2.087472421386 + 5.892871393618\times10^{-15}i$     \\ \bottomrule
  $z_{\ell}$ & $-2.087472421388$  \\ \bottomrule
\end{tabular}
\end{center}

\begin{center}
\begin{tabular}{cl}
  & $x_{1/2,n,9}^{\ast}$   \\ \hline
  $n=10$  & $222.192716349085 - 0.000476699235i$   \\
  $n=20$  & $185.503465533781 - 7.245543670015\times10^{-8}i$     \\
  $n=40$  & $185.473370922148 - 2.096712467864\times10^{-15}i$     \\ \bottomrule
  $z_{\ell}$ & $185.473370878074$  \\ \bottomrule
\end{tabular}
\end{center}

\medskip

\noindent \textbf{Fourth Experiment.}
We take $\alpha=-88/10$. Then, the function $w^{1-\alpha}J_{\alpha-1}\left(w\right)$ has 18 nonreal zeros, 2 of which  are purely imaginary zeros, and the rest of them are positive real zeros. Again, the function $w^{1-\alpha}J_{\alpha-1}^{(2)}\left(w(1-q) ; q\right)$ also has 18 nonreal zeros, but now  10 of them  are  purely imaginary zeros, being the rest of them  positive real zeros. Thus, the limit function (\ref{limit-function}) has 4 nonreal zeros and 5 negative real zeros being the rest of them positive.

\begin{center}
\begin{tabular}{cl}
  & $x_{1/2,n,1}^{\ast}$   \\ \hline
  $n=10$  & $-58.731778291761 + 24.983339723902i$   \\
  $n=20$  & $-48.142375334033 + 27.211453956792i$     \\
  $n=40$  & $-48.133792760941 + 27.212205588614i$     \\ \bottomrule
  $z_{\ell}$ & $-48.133792748725 + 27.212205587084i$  \\ \bottomrule
\end{tabular}
\end{center}

\begin{center}
\begin{tabular}{cl}
   & $x_{1/2,n,2}^{\ast}$   \\ \hline
  $n=10$  & $-58.732065276581 - 24.983660249396i$   \\
  $n=20$  & $-48.142375378613 - 27.211454014981i$     \\
  $n=40$  & $-48.133792760941 - 27.212205588614i$     \\ \bottomrule
  $z_{\ell}$ & $-48.133792748725 - 27.212205587084i$  \\ \bottomrule
\end{tabular}
\end{center}

\begin{center}
\begin{tabular}{cl}
   & $x_{1/2,n,3}^{\ast}$   \\ \hline
  $n=10$  & $-34.303303302277 + 138.270486284223i$   \\
  $n=20$  & $-13.528504443799 + 113.526606000274i$     \\
  $n=40$  & $-13.515626962528 + 113.506768620178i$     \\ \bottomrule
  $z_{\ell}$ & $-13.515626949090 + 113.506768591833i$  \\ \bottomrule
\end{tabular}
\end{center}

\begin{center}
\begin{tabular}{cl}
   & $x_{1/2,n,4}^{\ast}$   \\ \hline
  $n=10$  & $-34.303631075141 - 138.270994273937i$   \\
  $n=20$  & $-13.528504488949 - 113.526606084040i$     \\
  $n=40$  & $-13.515626962528 - 113.506768620178i$     \\ \bottomrule
  $z_{\ell}$ & $-13.515626949090 - 113.506768591833i$  \\ \bottomrule
\end{tabular}
\end{center}

\begin{center}
\begin{tabular}{cl}
   & $x_{1/2,n,5}^{\ast}$   \\ \hline
  $n=10$  & $-32.743951055138 - 0.000026360939i$   \\
  $n=20$  & $-31.625976675655 - 9.688310475475\times10^{-9}i$     \\
  $n=40$  & $-31.624501885216 - 2.805403876599\times10^{-16}i$     \\ \bottomrule
  $z_{\ell}$ & $-31.624501881169$  \\ \bottomrule
\end{tabular}
\end{center}

\begin{center}
\begin{tabular}{cl}
  & $x_{1/2,n,6}^{\ast}$   \\ \hline
  $n=10$  & $-16.740893534529 + 5.765133760916\times10^{-7}i$   \\
  $n=20$  & $-16.719718514606 + 2.835085312132\times10^{-10}i$     \\
  $n=40$  & $-16.719591520682 + 8.212134980550\times10^{-18}i$     \\ \bottomrule
  $z_{\ell}$ & $-16.719591519169$  \\ \bottomrule
\end{tabular}
\end{center}

\begin{center}
\begin{tabular}{cl}
   & $x_{1/2,n,7}^{\ast}$   \\ \hline
  $n=10$  & $-8.367018219885 - 8.764399202657\times10^{-10}i$   \\
  $n=20$  & $-8.349886270829 - 2.470953096732\times10^{-12}i$     \\
  $n=40$  & $-8.349814077147 - 7.157189188727\times10^{-20}i$     \\ \bottomrule
  $z_{\ell}$ & $-8.349814076382$  \\ \bottomrule
\end{tabular}
\end{center}

\begin{center}
\begin{tabular}{cl}
   & $x_{1/2,n,8}^{\ast}$   \\ \hline
  $n=10$  & $-4.183511394310 + 7.147780631497\times10^{-6}i$   \\
  $n=20$  & $-4.174980970526 + 3.925233400802\times10^{-12}i$     \\
  $n=40$  & $-4.174944906183 + 3.561296499329\times10^{-22}i$     \\ \bottomrule
  $z_{\ell}$ & $-4.174944905801$  \\ \bottomrule
\end{tabular}
\end{center}

\begin{center}
\begin{tabular}{cl}
  & $x_{1/2,n,9}^{\ast}$   \\ \hline
  $n=10$  & $-2.085538601937 + 0.001209882024i$   \\
  $n=20$  & $-2.087470352958 + 2.031659087739\times10^{-7}i$     \\
  $n=40$  & $-2.087472390014 + 5.879794684903\times10^{-15}i$     \\ \bottomrule
  $z_{\ell}$ & $-2.087472390016$  \\ \bottomrule
\end{tabular}
\end{center}

\begin{center}
\begin{tabular}{cl}
  & $x_{1/2,n,10}^{\ast}$   \\ \hline
  $n=10$  & $568.386160039012 - 0.000543294101i$   \\
  $n=20$  & $371.816049857173 - 7.279580302175\times10^{-8}i$     \\
  $n=40$  & $371.698707738027 - 2.106396160012\times10^{-15}i$     \\ \bottomrule
  $z_{\ell}$ & $371.698707595280$  \\ \bottomrule
\end{tabular}
\end{center}

\noindent \textbf{Fifth Experiment.}
We consider the same data as in the third experiment, but changing the value of $q$. Thus, we take $q=9/10.$  Now, we can observe that the function  $w^{1-\alpha}J_{\alpha-1}^{(2)}\left(w(1-q) ; q\right)$ has 16 nonreal zeros
  but it has not any purely imaginary zeros, so the limit function (\ref{limit-function}) has not any negative real zeros. In fact, it has 8 nonreal zeros and the rest of them are positive numbers. In the next tables we can also notice that the convergence is slower than in the other experiments. Thus, we have observed in all the numerical experiments made that when $q$ approaches to $1$ the convergence slows down.

  To avoid including many tables, we provide the values of  8 nonreal zeros of (\ref{limit-function}) and two examples. These zeros are

\begin{align*}
  -2.899672548545819 &\pm 0.866690762364998i, &\quad -2.054705381959291 &\pm 2.480492842076364i, \\
  -0.280308180558639 &\pm 3.667024750741143i, &\quad  2.632871686846202 &\pm 3.803425181595050i.
\end{align*}

\begin{center}
\begin{tabular}{cl}
  & $x_{9/10,n,7}^{\ast}$   \\ \hline
  $n=20$       & $3.163297460938 + 4.534270108808i$     \\
  $n=40$       & $2.688577412718 + 3.889727638001i$     \\ \bottomrule
  $z_{\ell}$ & $2.632871686846 + 3.803425181595i$  \\ \bottomrule
\end{tabular}
\end{center}

\begin{center}
\begin{tabular}{cl}
  & $x_{9/10,n,8}^{\ast}$   \\ \hline
  $n=20$       & $2.983050648402 - 4.679512324145i$     \\
  $n=40$       & $2.678631275315 - 3.897696238845i$     \\ \bottomrule
  $z_{\ell}$ & $2.632871686846 - 3.803425181595i$  \\ \bottomrule
\end{tabular}
\end{center}

\begin{center}
\begin{tabular}{cl}
   & $x_{9/10,n,9}^{\ast}$   \\ \hline
  $n=20$       & $9.365586442180 - 0.176724181442i$     \\
  $n=40$       & $7.857805351203 - 0.009728505494i$     \\ \bottomrule
  $z_{\ell}$ & $7.668210419914$  \\ \bottomrule
\end{tabular}
\end{center}

The third and fourth experiments show the differences pointed out in the conjecture about the number of purely imaginary zeros of the functions  $w^{1-\alpha}J_{\alpha-1}\left(w\right)$ and $w^{1-\alpha}J_{\alpha-1}^{(2)}\left(w(1-q) ; q\right)$ depending on the value of $\alpha.$ The third and fifth experiments also show the existence of purely imaginary zeros depending on $q$ too.

In conclusion, to obtain more asymptotic properties about the zeros of the $q$--hypergeometric polynomials
\begin{equation*}
\ _s\phi_{s}\left(\begin{array}{l}
q^{-n},q^{\mathfrak{a}_sn+\mathfrak{b}_s} \\
q^{\alpha},q^{\mathfrak{c}_sn+\mathfrak{d}_s}
\end{array} ; q, z(q-1)\right),
\end{equation*}
we need to go further in the knowledge of the zeros of the function $w^{1-\alpha}J_{\alpha-1}^{(2)}\left(w(1-q) ; q\right). $ That question remains open as far as we know.

\section*{Acknowledgments}
We thank the referees sincerely for their careful revision of the paper. Their suggestions and comments have been very useful to improve it. The authors  are partially supported by the Ministry of Science, Innovation, and
Universities of Spain and the European Regional Development Fund (ERDF), grant MTM2017-89941-P; they are
also partially supported by ERDF and Consejer\'{\i}a de Econom\'{\i}a, Conocimiento, Empresas y Universidad de la
Junta de Andaluc\'{\i}a (grant UAL18-FQM-B025-A) and by Research Group FQM-0229 (belonging to Campus of
International Excellence CEIMAR). The author J.J.M-B. is also partially supported by the research centre CDTIME
of Universidad de Almer\'{\i}a and by Junta de Andaluc\'{\i}a and ERDF, Ref. SOMM17/6105/UGR.

\end{document}